\newtheorem{theorem}{Theorem}
\newtheorem{lemma}[theorem]{Lemma}
\newtheorem{conjecture}[theorem]{Conjecture}
\newtheorem{claim}{Claim}
\title{\Large On the maximum number of minimum dominating sets in forests}
\author{\large
J.D. Alvarado$^1$\and  
S. Dantas$^1$\and 
E. Mohr$^2$\and 
D. Rautenbach$^2$}
\date{}
\begin{document}

%\linenumbers

\maketitle

\begin{center}
$^1$ Instituto de Matem\'{a}tica e Estat\'{i}stica, Universidade Federal Fluminense, Niter\'{o}i, Brazil, \texttt{josealvarado.mat17@gmail.com, sdantas@im.uff.br}\\[3mm]
$^2$Institut f\"{u}r Optimierung und Operations Research,
Universit\"{a}t Ulm, Ulm, Germany,
\{\texttt{elena.mohr, dieter.rautenbach}\}\texttt{@uni-ulm.de}\\[3mm]
\end{center}

\newcommand{\mds}{\sharp\gamma}

\begin{abstract}
Fricke, Hedetniemi, Hedetniemi, and Hutson asked whether every tree 
with domination number $\gamma$ has at most $2^\gamma$ 
minimum dominating sets.
Bie\'{n} gave a counterexample, 
which allows to construct forests with domination number $\gamma$
and $2.0598^\gamma$ minimum dominating sets.
We show that every forest with domination number $\gamma$ 
has at most $2.4606^\gamma$ minimum dominating sets,
and that every tree with independence number $\alpha$ 
has at most $2^{\alpha-1}+1$ maximum independent sets.
\end{abstract}
{\small
\begin{tabular}{lp{12.5cm}}
\textbf{Keywords:} & 
tree; domination number; minimum dominating set; 
independence number; maximum independent set
\end{tabular}
}

\pagebreak

\section{Introduction}

We consider only finite, simple, and undirected graphs.
A {\it dominating set} \cite{hahesl} of a graph $G$
is a subset $D$ of the vertex set $V(G)$ of $G$ 
such that every vertex in $V(G)\setminus D$ has a neighbor in $D$.
The {\it domination number} $\gamma(G)$ of $G$ 
is the minimum cardinality of a dominating set of $G$.
A dominating set of $G$ is 
{\it minimal} if no proper subset is dominating,
and 
{\it minimum} if it has cardinality $\gamma(G)$.
An {\it independent set} of a graph $G$
is a set of pairwise non-adjacent vertices of $G$.
The {\it independence number} $\alpha(G)$ of $G$ 
is the maximum cardinality of an independent set of $G$.
An independent set of $G$ is 
{\it maximal} if no proper superset is independent,
and 
{\it maximum} if it has cardinality $\alpha(G)$.

The motivation for the research reported in the present paper
was the question of Fricke et al.~\cite{frhehehu}
whether every tree with domination number $\gamma$
has at most $2^\gamma$ minimum dominating sets.
Bie\'{n} \cite{bi} pointed out 
that the tree $T^*$ in Figure \ref{fig1} is a counterexample.

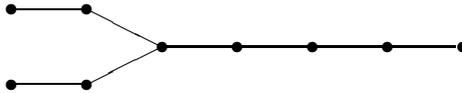
\begin{figure}[H]
\begin{center}
%TeXCAD Picture [1.pic]. Options:
%\grade{\on}
%\emlines{\off}
%\epic{\off}
%\beziermacro{\on}
%\reduce{\on}
%\snapping{\on}
%\pvinsert{% Your \input, \def, etc. here}
%\quality{8.000}
%\graddiff{0.005}
%\snapasp{1}
%\zoom{19.0273}
\unitlength 1mm % = 2.845pt
\linethickness{0.4pt}
\ifx\plotpoint\undefined\newsavebox{\plotpoint}\fi % GNUPLOT compatibility
\begin{picture}(66,16)(0,0)
\put(5,5){\circle*{1.5}}
\put(15,5){\circle*{1.5}}
\put(25,10){\circle*{1.5}}
\put(35,10){\circle*{1.5}}
\put(45,10){\circle*{1.5}}
\put(55,10){\circle*{1.5}}
\put(65,10){\circle*{1.5}}
\put(15,15){\circle*{1.5}}
\put(5,15){\circle*{1.5}}
\put(5,15){\line(1,0){10}}
\put(15,15){\line(2,-1){10}}
\put(25,10){\line(-2,-1){10}}
\put(15,5){\line(-1,0){10}}
\put(25,10){\line(1,0){10}}
\put(35,10){\line(1,0){29}}
\end{picture}
\end{center}
\vspace{-5mm}
\caption{A tree $T^*$ with domination number $4$ and $18$ minimum dominating sets.}\label{fig1}
\end{figure}
While the maximum number of minimum dominating sets 
of a tree with domination number $\gamma$ 
is more than $2^\gamma$,
we believe that $2^\gamma$ 
is still the correct exponential part;
more precisely, we pose the following conjecture.

\begin{conjecture}\label{conjecture1}
A tree with domination number $\gamma$ has at most 
$O\left(\frac{\gamma 2^{\gamma}}{\ln \gamma}\right)$
minimum dominating sets.
\end{conjecture}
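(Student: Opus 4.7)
The plan is to proceed by structural induction on $|V(T)|$, trying to establish a bound of the form $\mds(T) \le f(\gamma(T)) \cdot 2^{\gamma(T)}$ with $f(\gamma) = O(\gamma / \ln \gamma)$, by peeling off a deepest substructure at each step.

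First I would choose a longest path in $T$ and let $u$ be the vertex at distance $2$ from one endpoint along this path, rooting $T$ at the opposite end. Because the chosen path is longest, the subtree $T_u$ rooted at $u$ has depth at most $2$ and is a union of leaves and length-$2$ pendant paths attached to $u$. A finite case analysis on the local structure at $u$ (the number of leaf children, the number of length-$2$ pendants, whether $u$ must lie in every MDS, and whether the parent of $u$ is already forced into the MDS by its other descendants) reduces the counting to a smaller tree $T'$ obtained by deleting $T_u$ and possibly marking the parent of $u$ as already dominated. In every case the multiplicative factor $\mds(T)/\mds(T')$ can be bounded by an explicit function of $\gamma(T)-\gamma(T')$.

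In most of these local cases the factor is exactly $2^{\gamma(T)-\gamma(T')}$, so the induction closes cleanly and even gives the weaker bound $\mds(T) \le 2^{\gamma(T)}$; the ``bad'' cases are precisely those modeled on the Bie\'n gadget of Figure~\ref{fig1}, where a single unit of $\gamma$ is paid for strictly more than $2$ MDS extensions. To pick up the $O(\gamma/\ln\gamma)$ factor I would strengthen the inductive statement by introducing a potential that tracks the number of ``bad'' reductions used so far, and aim to show that after $\Theta(\gamma/\ln\gamma)$ of them one of the branches in the case analysis becomes unavailable, so that subsequent reductions fall back into a case with the clean factor $2$.

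The main obstacle is this packing argument. A plain case analysis inevitably gives a bound $c^{\gamma}$ with some $c > 2$ (the $2.4606^{\gamma}$ bound announced for forests is of exactly this shape), and to descend to $2^{\gamma}$ with only a polynomial overhead one must use the connectivity of the tree in an essential way: the forest version of the problem genuinely attains $2.0598^\gamma$ by stacking disjoint Bie\'n gadgets, so any proof of the conjecture must exploit the fact that in a tree these gadgets cannot appear as disjoint components but must be glued together, which should induce dependencies and forced vertices limiting how densely they can coexist. Quantifying this global constraint from a purely local recursion is where a genuinely new combinatorial ingredient seems to be needed, and I would expect this to be the crux of any proof of Conjecture~\ref{conjecture1}.
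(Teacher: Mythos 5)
The statement you are addressing is labelled Conjecture~\ref{conjecture1} in the paper and is \emph{not} proved there: the authors only prove the weaker Theorem~\ref{theorem1} (the bound $2.4606^\gamma$ for forests, via a local reduction at a deepest endvertex very much like the first stage of your plan) and then motivate the conjecture heuristically through the candidate trees $T(p_1,\ldots,p_k)$ of Figure~\ref{fig2}, whose optimal choice $k=\Theta(\gamma/\log\gamma)$ yields roughly $\gamma 2^{\gamma}/\ln\gamma$ minimum dominating sets. So there is no proof in the paper to match yours against, and your proposal is, by your own account, a programme rather than a proof: the ``packing/potential'' step that is supposed to limit the bad reductions is exactly the missing ingredient, and you correctly identify that nothing in the local recursion supplies it.

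Beyond the admitted gap, there is a quantitative flaw in the strengthened induction you propose. Suppose you did manage to show that only $\Theta(\gamma/\ln\gamma)$ reductions can be ``bad'', each costing a constant multiplicative factor $\rho>1$ above $2^{\,\gamma(T)-\gamma(T')}$ (for the Bie\'{n} gadget, $\rho\approx 18^{1/4}/2$ per unit of $\gamma$). The resulting bound would be $2^{\gamma}\rho^{\Theta(\gamma/\ln\gamma)}=2^{\gamma}e^{\Theta(\gamma/\ln\gamma)}$, which exceeds $O\!\left(\frac{\gamma 2^{\gamma}}{\ln\gamma}\right)$ by a superpolynomial factor, since $\gamma/\ln\gamma\gg c\ln\gamma$. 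The paper's extremal candidates show why: in $T(p_1,\ldots,p_k)$ the count is essentially $(k+1)2^{\gamma-1}$, i.e.\ the excess over $2^{\gamma}$ is \emph{additive} in the number $k$ of branches (it comes from a single global choice of which $w(i)$ enters the dominating set), not a product of local constant-factor excesses. Any proof of the conjecture must therefore convert the local surplus into an additive, not multiplicative, contribution — for instance by showing that the branches in the bad case are mutually exclusive alternatives rather than independent factors — and this is a different mechanism from the potential-function bookkeeping you sketch. Also note that your intermediate claim that the clean cases alone would give $\mds(T)\le 2^{\gamma}$ cannot be salvaged as stated, since already $T^*$ (which is $T(1,2)$) has $18>2^{4}$ minimum dominating sets; the bad configurations are unavoidable in the extremal trees, not exceptional.
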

A forest $F$ with domination number $\gamma$ 
whose components are copies of $T^*$ 
has $18^{\frac{\gamma}{4}}\approx 2.0598^\gamma$ minimum dominating sets,
that is, connectivity is essential for Conjecture \ref{conjecture1}.
Connolly et al.~\cite{cogagokake} showed that 
the maximum number of minimum dominating set 
of a graph of order $n$ and domination number $\gamma$ at least $3$ 
is between 
${n\choose \gamma}-O\left(n^{\gamma-1}\right)$
and
${n\choose \gamma}-\Omega\left(n^{\gamma-1-\frac{1}{\gamma-1}}\right)$,
that is, 
for general graphs, 
there is no upper bound on the number of minimum dominating sets
that only depends on the domination number.

The following is our main contribution.

\begin{theorem}\label{theorem1}
A forest with domination number $\gamma$ has at most 
$2.4606^\gamma$
minimum dominating sets.
\end{theorem}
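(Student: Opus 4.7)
The plan is to prove by induction on $|V(F)|$ that every forest $F$ satisfies $\mds(F)\le c^{\gamma(F)}$ with $c=2.4606$. The base case is the edgeless forest, where $\gamma(F)=|V(F)|$ and $\mds(F)=1$. Since any isolated vertex lies in every dominating set, I may henceforth assume $F$ has no isolated vertex.

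For the inductive step, fix a longest path $v_0v_1\ldots v_k$ inside some component of $F$. Because the path is longest, $v_0$ is a leaf; every neighbor of $v_1$ other than $v_2$ is a leaf; and every neighbor of $v_2$ other than $v_3$ is either a leaf or a degree-$2$ support of a single leaf. The strategy is, in each case below, to delete a small set $X\subseteq V(F)$ consisting of $v_0,v_1$ and possibly some further vertices from $N[\{v_2,v_3\}]$, obtaining $F'=F-X$, and to verify (i) $\gamma(F')\ge\gamma(F)-r$ for some integer $r$, and (ii) $\mds(F)\le m\cdot\mds(F')$ for some $m$, both depending only on the case. Induction closes the argument whenever $m\le c^r$.

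The clean subcases are handled first. If $v_1$ has at least two leaf neighbors, then $v_1$ belongs to every minimum dominating set and none of its leaves does, so deleting $v_1$ together with all its leaves yields $(m,r)=(1,1)$. If $v_2$ has a leaf neighbor (not accessed through a $v_1$-type $P_2$-leg), a similar argument forces $v_2$ into every minimum dominating set. The core of the proof is the remaining case, in which $\deg(v_1)=2$ and $v_2$ carries $j\ge 1$ legs identical to $v_1v_0$ (and nothing else besides $v_3$); this is precisely the local structure at the central vertex of the Bie\'n tree $T^*$. A direct count shows that the $j$ legs contribute a factor of $2^j-1$ to $\mds$ and $j$ to $\gamma$, giving a local ratio $(2^j-1)^{1/j}<2$. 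The ratio can only exceed $2$ when this substructure is combined with further structure reached through $v_3$, and a finite case analysis at $v_3$ (and occasionally $v_4$) produces the worst-case recurrences whose dominant root is $c=2.4606$.

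The main obstacle I anticipate is to keep the case analysis finite while keeping the accounting tight. The value $c$ is not tight against $T^*$ itself (whose ratio is only $18^{1/4}\approx 2.0598$), so several reductions will be slack, but at least one combined reduction must saturate at $c$; identifying it is where the actual work lies. To make the local reductions compose cleanly, it is natural to strengthen the inductive hypothesis to a refined statement that also counts minimum dominating sets of $F$ conditioned on the status of one designated vertex (in $D$; not in $D$ but dominated by an inside neighbor; not in $D$ and needing external domination). Each case then reduces to a finite linear inequality among refined counts, and $c=2.4606$ emerges as the Perron value of the resulting system.
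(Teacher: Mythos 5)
Your overall strategy (root at a longest path, reduce at a deepest leaf by deleting a few vertices near $v_1,v_2,v_3$, and strengthen the inductive hypothesis so the recurrences compose) is the same general scheme as the paper's, but what you have written is a plan rather than a proof: the decisive content is precisely what you defer. The constant $2.4606$ is not generic; in the paper it is the largest root of $\beta^3-\beta^2-4\beta+1=0$, which arises from one specific bottleneck configuration (a vertex $w$ whose children are two degree-two support vertices, each with one leaf) via the inequality $\frac{4}{\beta^2}+\frac{3\alpha}{\beta^3}\le 1$, coupled with $\frac{1}{\alpha}+\frac{1}{\beta}=1$ coming from the strong-support reduction. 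The paper's strengthening is a global two-parameter bound $f(\gamma,s)\le\alpha^s\beta^{\gamma-s}$, where $s$ is the number of strong support vertices and $\alpha=\beta/(\beta-1)\approx 1.6847$; your proposed strengthening (conditioning on the status of one designated boundary vertex) is a different, transfer-matrix-style refinement that you never set up, and the assertion that its Perron value ``emerges as'' $2.4606$ is reverse-engineering the answer rather than deriving it. Some refinement is genuinely necessary: with the plain hypothesis $\mds(F)\le c^{\gamma}$ the bottleneck configuration only yields $f(\gamma)\le 7f(\gamma-2)$, i.e. $c=\sqrt{7}\approx 2.646$, so the case analysis and the accompanying weights must actually be exhibited and checked case by case before the theorem with the stated constant is proved.

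There is also a concrete error in your ``clean'' subcase. If $v_1$ is a strong support vertex, deleting $v_1$ and its leaves does not give $\mds(F)\le\mds(F')$ with $(m,r)=(1,1)$: removing $v_1$ from a minimum dominating set leaves a set of size $\gamma-1$ that dominates $F'$ except possibly $v_2$, which in $F$ was dominated by $v_1$ from outside, and such ``almost dominating'' sets need not be minimum dominating sets of $F'$ and can outnumber them. The paper handles exactly this by splitting according to whether $v_2$ lies in the set, modelling the two possibilities by $F-N_F[v_1]$ and by the forest obtained from $F-(N_F[v_1]\setminus\{v_2\})$ by attaching two new pendant vertices to $v_2$ (forcing $v_2$ into every minimum dominating set), which costs a factor $2$ rather than $1$ and is one of the places where the $\alpha^s$ term is needed to stay below $\beta^{\gamma}$. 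The same bookkeeping subtlety (residual sets are only required to dominate a forest in which one boundary vertex is pre-dominated) recurs in every reduction of your plan, which is why the finite case analysis cannot be waved away and why, as it stands, your argument does not establish the bound $2.4606^{\gamma}$.
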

There are some related results
concerning the maximum number of dominating sets 
that are not necessarily minimum.
Fomin et al.~\cite{fogrpyst} showed that 
a graph of order $n$ has at most $1.7696^n$ minimal dominating sets.
Br\'{o}d and Skupie\'{n} \cite{brsk} showed that trees of order $n$ 
can have up to $\Theta\left(5^{\frac{n}{3}}\right)$ dominating sets.
They actually determined the precise maximum number of dominating sets 
for every value of $n$ and characterized the extremal trees,
each of which has in fact at most four distinct minimum dominating sets.

Inspired by Conjecture \ref{conjecture1} and Theorem \ref{theorem1},
we consider the analogous question of how many 
maximum independent sets 
a tree with independence number $\alpha$ can have.
Since the independence number allows simpler reductions
than the domination number,
it is not surprising that 
the maximum number of maximum independent sets 
is better understood.
Zito \cite{zi} showed that the number of maximum independent sets
of a tree of order $n$ is at most
$2^{\frac{n-2}{2}}+1$ if $n$ is even,
and at most
$2^{\frac{n-3}{2}}$ if $n$ is odd,
and she also characterized all extremal trees.
For similar results concerning the maximum number of maximal independent sets see \cite{kogodo,wl}.

As our second contribution 
we give a very simple independent proof 
of the following variation of Theorem \ref{theorem1},
which can also be derived from Zito's results
using the fact that the independence number 
of every tree is at least half its order.

\begin{theorem}\label{theorem2}
A tree $T$ with independence number $\alpha$ has at most 
$2^{\alpha-1}+1$
maximum independent sets
with equality if and only if 
$T$ arises by subdividing $\alpha-1$ edges  
of a star of order $\alpha+1$
once.
\end{theorem}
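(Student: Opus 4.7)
The plan is induction on $n = |V(T)|$, strengthened to include the equality characterization (so that a non-extremal tree obeys the strict inequality $i(T) \leq 2^{\alpha - 1}$, where $i(T)$ denotes the number of maximum independent sets of $T$). After verifying small base cases directly ($n \leq 4$, covering $K_1$, $K_2$, $P_3$, $P_4$, and $K_{1,3}$), I consider a tree $T$ with $\alpha \geq 2$ and fix a longest path $v_0 v_1 \dots v_k$. The longest-path property forces every non-$v_2$ neighbor of $v_1$ to be a leaf of $T$.

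The easy case is $\deg(v_1) \geq 3$: then $v_1$ has at least two leaf neighbors, so in every maximum independent set of $T$ these leaves are included and $v_1$ is excluded. Deleting $v_1$ and its leaves yields a tree $T'$ with $\alpha(T') \leq \alpha - 2$, and induction gives $i(T) = i(T') \leq 2^{\alpha - 3} + 1 < 2^{\alpha - 1} + 1$. The main case is $\deg(v_1) = 2$. Setting $T^* = T - \{v_0, v_1\}$, a tree with $\alpha(T^*) = \alpha - 1$ containing $v_2$, I claim the identity
\begin{equation*}
i(T) \;=\; i(T^*) + d,
\end{equation*}
where $d$ is the number of maximum independent sets of $T^*$ that avoid $v_2$. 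This follows from two bijections: maximum independent sets $S$ of $T$ with $v_0 \in S$ correspond to $S \setminus \{v_0\}$, a maximum independent set of $T^*$; and maximum independent sets $S$ with $v_0 \notin S$ must contain $v_1$ (by maximality) and hence avoid $v_2$, so $S \setminus \{v_1\}$ is a maximum independent set of $T^*$ avoiding $v_2$.

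To close the induction, I apply the strengthened hypothesis to $T^*$. If $T^*$ is not the extremal spider, then $i(T^*) \leq 2^{\alpha - 2}$ and since $d \leq i(T^*)$ one obtains $i(T) \leq 2^{\alpha - 1}$, strictly below the bound. If $T^*$ is the extremal spider $T^*_{\alpha - 1}$, a direct inspection of its explicit list of maximum independent sets (one "central" set and $2^{\alpha - 2}$ "leg-choice" sets) shows that every vertex of $T^*$ lies in at least one such set, so the count $c := i(T^*) - d$ of maximum independent sets containing $v_2$ satisfies $c \geq 1$; hence $d \leq 2^{\alpha - 2}$ and $i(T) \leq 2^{\alpha - 1} + 1$. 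The main obstacle is the equality case: I must track exactly when $d = 2^{\alpha - 2}$, i.e., when $v_2$ lies in a unique maximum independent set of $T^*_{\alpha - 1}$, and then verify that attaching the pendant path $v_0 v_1$ to such a $v_2$ reconstructs the extremal spider $T^*_\alpha$. The explicit enumeration of maximum independent sets of $T^*_{\alpha - 1}$ identifies the eligible choices of $v_2$ (the center, together with a few extra vertices in small cases due to symmetry), and in every such case a short check confirms that the resulting $T$ is isomorphic to the tree described in the theorem.
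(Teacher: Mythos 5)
Your argument is correct, and it takes a genuinely different route from the paper's. Both proofs root $T$ at an end of a longest path and first dispose of the case where the deepest support vertex has two or more leaf neighbours (your Case 1 is the paper's Claim 1), but the main reductions differ. The paper works with the grandparent $w$ of the deepest leaf, lets $P$ be the set of all $2p$ descendants of $w$ below its children, and uses the inequality $\sharp\alpha(T)\leq\sharp\alpha(T-P)+\left(2^p-1\right)\sharp\alpha\big(T-(\{w\}\cup P)\big)$, splitting according to how a maximum independent set meets the legs below $w$; it then needs separate claims ruling out leaf children of $w$ and analysing, by explicit case figures, what happens when either reduced tree is the extremal spider. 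You instead delete only the single pendant path $v_0v_1$ and use the exact identity $i(T)=i(T^*)+d$, where $d$ counts the maximum independent sets of $T^*=T-\{v_0,v_1\}$ avoiding the attachment vertex $v_2$; the bound then follows from $d\leq i(T^*)$ when $T^*$ is not extremal, and from the observation that every vertex of the extremal spider lies in some maximum independent set when it is. Your approach is the more economical reduction and makes the recursion an equality rather than an estimate, at the price of having to classify which vertices of $T(k)$ lie in a \emph{unique} maximum independent set for the equality case (the centre for all $k$, plus the extra symmetric choices when $k\in\{0,1\}$) and to check that attaching the pendant path there reproduces $T(\alpha-1)$; the paper's larger bite avoids that uniqueness analysis but pays with a longer case discussion (its Claims 2--5). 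Do make sure the write-up records the two small points your sketch relies on implicitly: that $\alpha(T^*)=\alpha-1$ whenever a pendant $P_2$ is removed, and that in Case 1 the reduced tree is nonempty so the inductive bound $2^{\alpha-3}+1\leq 2^{\alpha-1}$ applies.
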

The following section contains the proofs of our results.
Furthermore, we collect some properties of trees
with a fixed domination number having 
the maximum possible number of minimum dominating sets, and
present a candidate tree
motivating Conjecture \ref{conjecture1}.

\section{Proofs and comments}

We need some more terminology.

Let $F$ be a forest.
An {\it endvertex} of $F$ is a vertex of degree at most $1$.
A neighbor of an endvertex of $F$ is a {\it support vertex} of $F$.
If a support vertex of $F$ is adjacent to at least two endvertices,
then it is a {\it strong support vertex} of $F$.
Clearly, every minimum dominating set of $F$ 
contains every strong support vertex of $F$.

\begin{proof}[Proof of Theorem \ref{theorem1}]
For two positive integers $\gamma$ and $s$ with $s\leq \gamma$,
let $f(\gamma,s)$ be the maximum number of minimum dominating sets
among all forests 
with domination number $\gamma$ and $s$ strong support vertices.
Let $\beta$ be the largest solution of the equation $\beta^3-\beta^2-4\beta+1 = 0$,
that is, $\beta\approx 2.4606$.
Let $\alpha=\frac{\beta}{\beta-1}\approx 1.6847$.
By an inductive argument, 
we show $f(\gamma,s)\leq \alpha^s\beta^{\gamma-s}$.
Since $\alpha<\beta$, this implies the desired statement.
Clearly, $f(1,0)=2<\beta$ and $f(\gamma,\gamma)=1<\alpha^\gamma$.
Hence, we may assume that $\gamma\geq 2$ and $s<\gamma$.

For a contradiction, 
suppose that $\gamma$ and $s$ are chosen 
such that $f(\gamma,s)>\alpha^s\beta^{\gamma-s}$
and, subject to this condition, 
the value of $\gamma$ is as small as possible.
Let the forest $F$ 
with domination number $\gamma$ and $s$ strong support vertices
have $f(\gamma,s)$ minimum dominating sets.
If every component of $F$ has domination number $1$,
then $f(\gamma,s)\leq 2^{\gamma-s}<\alpha^s\beta^{\gamma-s}$.
Hence, we may assume that some component $T$ of $F$ 
has domination number at least $2$,
that is, the tree $T$ is not a star.
Let $v$ be a vertex of $T$ 
that is not an endvertex of $T$ 
and has exactly one neighbor $w$ that is not an endvertex of $T$.
Let $u$ be an endvertex of $T$ that is a neighbor of $v$.
In what follows, we construct several forests $F^{(1)},F^{(2)},\ldots$ derived from $F$,
and denote by $s^{(i)}$ the number of strong support vertices of $F^{(i)}$.

\begin{claim}\label{claim1}
$v$ is not a strong support vertex of $T$.
\end{claim}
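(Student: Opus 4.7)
The plan is to derive a contradiction by assuming that $v$ is a strong support vertex. Let $u = u_1, u_2, \ldots, u_k$ denote the endvertex neighbors of $v$, with $k \ge 2$. Every minimum dominating set $D$ of $F$ then contains $v$ (to cover both $u_1$ and $u_2$ using a single vertex) and excludes every $u_i$ (otherwise $D \setminus \{u_i\}$ would still be dominating), so $D \cap \{v, u_1, \ldots, u_k\} = \{v\}$ rigidly. I will then construct a smaller auxiliary forest, relate its minimum dominating sets to those of $F$, and exploit the defining equation of $\beta$ to force $\mds(F) \le \alpha^s\beta^{\gamma - s}$, contradicting the extremality of $F$.

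First I reduce to the case $k = 2$. For $k \ge 3$, the deletion $F - u_k$ has the same $\gamma$, the same $s$ (since $v$ still has at least two pendant neighbors), and identically the same minimum dominating sets as $F$ via the identity map. Taking $F$ additionally to be of minimum order among counterexamples of smallest $\gamma$, this is already a contradiction, so I may assume $k = 2$. Set $F^{(1)} := F - u_2$. Then $v$ has become a non-strong support vertex of $F^{(1)}$, so $s^{(1)} = s - 1$; a short reductio (any putative size-$(\gamma - 1)$ dominating set of $F^{(1)}$ containing $u_1$ yields, after a $u_1 \leftrightarrow v$ swap, a size-$(\gamma - 1)$ dominating set of $F$) shows that $\gamma(F^{(1)}) = \gamma$.

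The minimum dominating sets of $F^{(1)}$ then split into those containing $v$---which, since $v$ covers $u_2$, are exactly the minimum dominating sets of $F$---and those containing $u_1$ but not $v$, which are forced since $u_1$ is now a pendant at $v$. Via the swap $u_1 \leftrightarrow v$ the latter group biject onto the ``type (a)'' minimum dominating sets of $F$ in which $w$ has some dominator in $D \setminus \{v\}$. Writing $B$ for the number of ``type (b)'' minimum dominating sets of $F$ in which $v$ is the unique dominator of $w$, this yields the identity
\[
\mds(F^{(1)}) \;=\; 2\,\mds(F) - B.
\]
Combining with the extremality-derived bound $\mds(F^{(1)}) \le \alpha^{s-1}\beta^{\gamma-s+1}$ and the numerical fact $\beta \le 3$ (equivalently $\alpha \ge \beta/2$, which follows from $\beta^3 - \beta^2 - 4\beta + 1 = 0$), the case $B = 0$ immediately gives $\mds(F) \le \tfrac{1}{2}\alpha^{s-1}\beta^{\gamma-s+1} \le \alpha^s\beta^{\gamma-s}$, the desired contradiction.

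The main obstacle is the case $B > 0$. For each type (b) set $D$, the restriction $D \setminus \{v\}$ is a minimum dominating set of the smaller forest $F^{(3)} := F - \{v, u_1, u_2, w\}$ that additionally avoids the non-$v$ neighbors $w_1, \ldots, w_\ell$ of $w$; one checks $\gamma(F^{(3)}) = \gamma - 1$. The induction hypothesis, now genuinely invoked on the strictly smaller parameter $\gamma - 1$, bounds $B$ by $\mds(F^{(3)}) \le \alpha^{s'}\beta^{\gamma - 1 - s'}$ for a suitable $s'$ (typically $s-1$, with a minor adjustment in the degenerate case $\deg_F(w) = 2$). Plugging this into the identity $\mds(F) = (\mds(F^{(1)}) + B)/2$ and using the defining cubic of $\beta$ to absorb the correction term yields $\mds(F) \le \alpha^s\beta^{\gamma - s}$ in all cases, completing the contradiction.
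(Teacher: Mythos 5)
Your setup is fine up to and including the identity $\mds(F^{(1)})=2\,\mds(F)-B$ for $F^{(1)}=F-u_2$ (the reduction to two pendant neighbours, the equality $\gamma(F^{(1)})=\gamma$, the swap bijection, and the $B=0$ case all check out, granted that you strengthen the induction to a lexicographic one on $(\gamma,\mbox{order})$ -- the paper's induction is only on $\gamma$, so the bound you invoke for $F^{(1)}$, which has the \emph{same} domination number as $F$ but one fewer strong support vertex, is not available in the framework as stated; this deviation is repairable, but it must be flagged and carried through the whole proof). The genuine gap is the final step in the case $B>0$. The best you can guarantee for $F^{(3)}=F-\{u_1,u_2,v,w\}$ is $s^{(3)}\geq s-1$ (only $v$ is lost as a strong support vertex; new ones need not appear), so your two bounds are $\mds(F^{(1)})\leq \alpha^{s-1}\beta^{\gamma-s+1}$ and $B\leq \alpha^{s-1}\beta^{\gamma-s}$, giving
\[
\mds(F)\;=\;\tfrac{1}{2}\bigl(\mds(F^{(1)})+B\bigr)\;\leq\;\tfrac{\beta+1}{2}\,\alpha^{s-1}\beta^{\gamma-s}.
\]
For this to be at most $\alpha^{s}\beta^{\gamma-s}$ you need $\tfrac{\beta+1}{2}\leq \alpha=\tfrac{\beta}{\beta-1}$, i.e.\ $\beta^2-2\beta-1\leq 0$, i.e.\ $\beta\leq 1+\sqrt{2}\approx 2.4142$. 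But $\beta\approx 2.4606>1+\sqrt{2}$ (the value of $\beta$ is forced upward by the computation in Claim 3), so the combined bound is about $1.027\,\alpha^{s}\beta^{\gamma-s}$ and no contradiction follows. The phrase ``using the defining cubic of $\beta$ to absorb the correction term'' does not correspond to any valid computation here; the cubic $\beta^3-\beta^2-4\beta+1=0$ encodes $\tfrac{4}{\beta^2}+\tfrac{3\alpha}{\beta^3}=1$ together with $\tfrac{1}{\alpha}+\tfrac{1}{\beta}=1$, and neither identity rescues $\tfrac{\beta+1}{2}\leq\alpha$.

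The reason the paper's argument closes while yours does not is an accounting of strong support vertices. The paper does not keep $u_1$ around: it deletes all of $N_F[v]\setminus\{w\}$ (dropping $\gamma$ by one) and splits the residual sets according to whether they contain $w$; in the ``$w\in D'$'' branch it attaches two new endvertices at $w$, so the lost strong support vertex $v$ is replaced by $w$ and the exponent of $\alpha$ is preserved ($s^{(1)}=s$), while the ``$w\notin D'$'' branch only costs $s^{(2)}\geq s-1$. This yields exactly $\tfrac{1}{\beta}+\tfrac{1}{\alpha}=1$, which is tight. Your route, by contrast, pays the loss of the strong support vertex in \emph{both} terms ($F-u_2$ and $F^{(3)}$), which is strictly weaker and fails at the paper's value of $\beta$. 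To fix your argument you would need either a better handle on $B$ (e.g.\ showing that in the $B>0$ configuration a compensating strong support vertex can be manufactured, as the paper does by adding two pendants) or a smaller $\beta$, which the rest of the proof does not permit.
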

\begin{proof}[Proof of Claim \ref{claim1}]
Suppose, for a contradiction, that $v$ is a strong support vertex.

First, we assume that $w$ is also a strong support vertex.
Let $F^{(1)}=F-(N_F[v]\setminus \{ w\})$.
$F^{(1)}$ has domination number $\gamma-1$ and $s-1$ strong support vertices.
A set $D$ is a minimum dominating set of $F$
if and only if $D=D'\cup \{ v\}$,
where $D'$ is a minimum dominating set of $F^{(1)}$.
By the choice of $\gamma$ and $s$, this implies 
$$f(\gamma,s)
\leq f(\gamma-1,s-1)
\leq \alpha^{s-1}\beta^{\gamma-s}
<\alpha^s\beta^{\gamma-s}.$$
Hence, we may assume that $w$ is no strong support vertex.

We will now use an important argument for the first time,
which we apply frequently within the rest of the proof.
As observed above, 
every minimum dominating set of $F$ contains $v$.
Therefore, if $F'=F-(N_F[v]\setminus \{ w\})$,
then we have to estimate 
the number of (almost) dominating sets $D'$ of $F'$ of cardinality $\gamma-1$,
where the vertex $w$ is already dominated (from the `outside' by $v$),
that is, the vertex $w$ may be used within $D'$ to dominate neighbors in $F'$
but no vertex in $D'$ needs to dominate $w$.
Now, such sets $D'$ that do not contain $w$
are simply dominating sets of cardinality $\gamma-1$ of $F'-w$,
while such sets $D'$ that contain $w$
are in one-to-one correspondence
with minimum dominating sets of cardinality $\gamma-1$ of 
the forest that arises from $F'$ by attaching two new endvertices to $w$.
This latter operation forces the minimum dominating set to contain $w$,
in which case it is irrelevant that $w$ is already dominated from the outside. 
More preciely, 
let $F^{(1)}$ arise from $F-(N_F[v]\setminus \{ w\})$ 
by attaching two new endvertices to $w$,
and let $F^{(2)}=F-N_F[v]$.
Note that $s^{(1)}=s$ and $s^{(2)}\geq s-1$.
A set $D$ is a minimum dominating set of $F$
if and only if $D=D'\cup \{ v\}$,
where $D'$ is a minimum dominating set of cardinality $\gamma-1$ 
of either $F^{(1)}$ or $F^{(2)}$.
Note that the domination number of $F^{(1)}$ and $F^{(2)}$ 
may be different from $\gamma-1$,
in which case 
the corresponding forest just has 
$0$ minimum dominating sets of cardinality $\gamma-1$.
By the choice of $\gamma$ and $s$, this implies 
\begin{eqnarray*}
f(\gamma,s) & \leq & 
f\left(\gamma-1,s^{(1)}\right)
+f\left(\gamma-1,s^{(2)}\right)\\
& \leq & 
\alpha^{s^{(1)}}\beta^{\gamma-s^{(1)}-1}
+\alpha^{s^{(2)}}\beta^{\gamma-s^{(2)}-1}\\
& \leq & 
\alpha^s\beta^{\gamma-s-1}
+\alpha^{s-1}\beta^{\gamma-s}\\
& = & \left(\frac{1}{\beta}+\frac{1}{\alpha}\right)\alpha^s\beta^{\gamma-s}\\
& = & \alpha^s\beta^{\gamma-s},
\end{eqnarray*}
where the last equality follows from the choice of $\alpha$.
This contradiction completes the proof of the claim.
\end{proof}

\begin{claim}\label{claim2}
$w$ is not a support vertex of $T$.
\end{claim}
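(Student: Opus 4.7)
The plan is to derive a contradiction by establishing $f(\gamma,s)<\alpha^s\beta^{\gamma-s}$ under the additional assumption that $w$ is a support vertex. Let $u'$ be an endvertex neighbor of $w$, and write $W=N_F(w)\setminus\{v,u'\}$; by the choice of $v$, every vertex of $W$ is a non-endvertex of $F$. Claim~\ref{claim1} already gives that $v$'s unique endvertex neighbor is $u$, so $\deg_F(v)=2$. For any minimum dominating set $D$ of $F$, minimality forces $|D\cap\{u,v\}|=|D\cap\{u',w\}|=1$, splitting $D$ into four mutually exclusive cases according to $D\cap\{u,v,u',w\}$: (a)~$\{v,w\}$, (b)~$\{v,u'\}$, (c)~$\{u,w\}$, or (d)~$\{u,u'\}$.

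In every case the residue $D''=D\setminus\{u,v,u',w\}$ has size $\gamma-2$ and lies inside $F_a=F-\{u,v,u',w\}$. A short check shows that in (a) and (c) the fixed pair dominates $\{u,v,u',w\}\cup W$, so $D''$ need only dominate $V(F_a)\setminus W$; in (b) and (d) the fixed pair dominates $\{u,v,u',w\}$ only, so $D''$ must dominate all of $V(F_a)$. Writing $N_1$ for the common count in (a)/(c) and $N_2$ for the common count in (b)/(d), this yields $f(\gamma,s)\leq 2N_1+2N_2$.

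To bound $N_1$, I would use the endvertex-attachment trick from the proof of Claim~\ref{claim1}: let $F^{(1)}$ be obtained from $F-\{u,v,u'\}$ by attaching two new endvertices to $w$. Then $w$ is a strong support vertex of $F^{(1)}$, every minimum dominating set of $F^{(1)}$ contains $w$, and $N_1$ equals the number of minimum dominating sets of $F^{(1)}$ of cardinality $\gamma-1$. If $w$ is already a strong support vertex of $F$ then $s^{(1)}=s$, and already $f(\gamma,s)\leq 2N_1\leq 2\alpha^s\beta^{\gamma-s-1}<\alpha^s\beta^{\gamma-s}$ (since $\beta>2$), a contradiction. Otherwise $u'$ is the unique endvertex neighbor of $w$ in $F$, and $w$ becomes a new strong support vertex in $F^{(1)}$, so $s^{(1)}=s+1$ and $N_1\leq\alpha^{s+1}\beta^{\gamma-s-2}$. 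For $N_2$, I would use $N_2\leq f(\gamma-2,s(F_a))$ and argue $s(F_a)\geq s$: none of $u,v,u',w$ is a strong support of $F$ and the endvertex neighbors of any strong support of $F$ lie outside $\{u,v,u',w\}$, so all original strong supports persist in $F_a$, while degree-$2$ vertices of $W$ that become endvertices in $F_a$ may only create additional strong supports. Because $\alpha<\beta$, this gives $N_2\leq\alpha^s\beta^{\gamma-s-2}$.

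Combining yields $f(\gamma,s)\leq 2\alpha^s\beta^{\gamma-s-2}(\alpha+1)=\alpha^s\beta^{\gamma-s}\cdot\frac{2(\alpha+1)}{\beta^2}$. Using $\alpha=\beta/(\beta-1)$ one computes $2(\alpha+1)=(4\beta-2)/(\beta-1)$, and the defining cubic $\beta^3-\beta^2-4\beta+1=0$ rearranges to $\beta^2(\beta-1)-(4\beta-2)=1>0$, so $2(\alpha+1)/\beta^2<1$ and the required strict inequality follows. I expect the most delicate points to be verifying the four-case partition (both minimality constraints are independent, which makes it exhaustive and disjoint) and the careful strong-support bookkeeping under the $F^{(1)}$ and $F_a$ reductions; the final arithmetic then follows routinely from the cubic defining $\beta$.
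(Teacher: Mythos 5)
Your argument is correct, but it takes a genuinely different and noticeably heavier route than the paper's. The paper exploits the hypothesis more directly: since $w$ has an endvertex neighbour distinct from $u$, the vertex $w$ is automatically dominated in $F-\{u,v\}$ by whichever of $w$ and that endvertex a dominating set must contain; hence with $F^{(1)}=F-\{u,v\}$ one gets $\gamma\big(F^{(1)}\big)=\gamma-1$, $s^{(1)}=s$, and every minimum dominating set of $F$ is exactly $D'\cup\{u\}$ or $D'\cup\{v\}$ for a minimum dominating set $D'$ of $F^{(1)}$, so $f(\gamma,s)\le 2f(\gamma-1,s)\le 2\alpha^s\beta^{\gamma-s-1}<\alpha^s\beta^{\gamma-s}$, using only $\beta>2$. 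You instead partition according to the trace $D\cap\{u,v,u',w\}$ (four cases, justified by the two independent minimality constraints), reduce $\gamma$ by two, re-use the endvertex-attachment trick to encode that $W$ is already dominated, and do strong-support bookkeeping; your closing inequality $2(\alpha+1)<\beta^2$ is exactly the defining cubic with slack $1$, so the bound does close, but with less room ($2(\alpha+1)/\beta^2\approx 0.89$ versus the paper's $2/\beta\approx 0.81$) and with more to verify ($s^{(1)}=s$ or $s+1$, $s(F_a)\ge s$, and the implicit fact, also left implicit in the paper, that a cardinality-$(\gamma-1)$, resp.\ $(\gamma-2)$, dominating set of the reduced forest is minimum because a smaller one would give a dominating set of $F$ of size less than $\gamma$). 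One side remark in your write-up is wrong as stated: the assertion that ``by the choice of $v$'' every vertex of $W=N_F(w)\setminus\{v,u'\}$ is a non-endvertex --- the choice of $v$ restricts the neighbours of $v$, not of $w$, and the assertion fails exactly when $w$ is a strong support vertex. Fortunately this is harmless: in that subcase every minimum dominating set contains $w$, so you only need $N_1$ and never use the remark, while in the other subcase it follows from the assumption that $u'$ is the unique endvertex neighbour of $w$; so the proof stands, it is simply longer than necessary.
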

\begin{proof}[Proof of Claim \ref{claim2}]
Suppose, for a contradiction, that $w$ is a support vertex.
Let $F^{(1)}=F-\{ u,v\}$.
$F^{(1)}$ has domination number $\gamma-1$ and $s$ strong support vertices.
A set $D$ is a minimum dominating set of $F$
if and only if 
$D\in \Big\{ D'\cup \{ v\}, D'\cup \{ u\}\Big\}$,
where $D'$ is a minimum dominating set of $F^{(1)}$.
By the choice of $\gamma$ and $s$, this implies 
$$f(\gamma,s)
\leq 2f(\gamma-1,s)
\leq 2\alpha^{s}\beta^{\gamma-s-1}
< \alpha^s\beta^{\gamma-s}.$$
This contradiction completes the proof of the claim.
\end{proof}
For the remaining claims, 
we restrict the choice of $u$, $v$, and $w$ slightly further.
More precisely, we assume that 
$T$ is rooted at some endvertex, 
$u$ is an endvertex of maximum depth,
$v$ is the parent of $u$, 
and $w$ is the parent of $v$.

\begin{claim}\label{claim3}
$w$ has exactly one child in $T$.
\end{claim}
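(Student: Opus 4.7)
My plan is to derive a contradiction by assuming that $w$ has $t\ge 2$ children in $T$, say $v_1=v,v_2,\ldots,v_t$, and using two auxiliary forests to control the count. First, I would show that the subtree $T_w$ of $T$ rooted at $w$ has the simplest possible shape: each $v_i$ has exactly one child, and that child is an endvertex $u_i$. The maximum-depth choice of $u$ forces every child of every $v_i$ to be an endvertex; Claim~\ref{claim2} forces $v_i$ itself to be a non-endvertex (otherwise $w$ would be a support vertex); and the proof of Claim~\ref{claim1} uses only that $v_i$ is a non-endvertex with exactly one non-endvertex neighbor (namely $w$), so it applies verbatim to each $v_i$, showing that $v_i$ is not a strong support vertex and therefore has at most, hence exactly, one child.

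With this structure in hand, I would set $F^{(1)}=F-\{v_t,u_t\}$ and $F^{(2)}=F-V(T_w)$, so that the parent $w'$ of $w$ is retained in $F^{(2)}$. One checks $\gamma(F^{(1)})=\gamma-1$, $\gamma(F^{(2)})\ge\gamma-t$, and, since by the previous paragraph no strong support vertex of $F$ lies in $V(T_w)$, also $s^{(1)}=s$ and $s^{(2)}\ge s$. The core counting argument is then: extending any minimum dominating set $D'$ of $F^{(1)}$ by $v_t$ or by $u_t$ yields two distinct minimum dominating sets of $F$; conversely, for a minimum dominating set $D$ of $F$, the unique vertex $x$ of $D\cap\{v_t,u_t\}$ can be removed to give a minimum dominating set of $F^{(1)}$, \emph{except} in the ``exceptional'' case $x=v_t$ where $v_t$ is the only vertex of $D$ that dominates $w$. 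In every exceptional $D$ one finds $v_t\in D$, $u_t\notin D$, $w\notin D$, $w'\notin D$, and $u_j\in D$, $v_j\notin D$ for every $j<t$, so $D\cap V(F^{(2)})$ is a dominating set of $F^{(2)}$ of size $\gamma-t$ with $w'\notin D$; conversely, any such set of $F^{(2)}$ extends back to an exceptional $D$. Hence the existence of an exceptional $D$ forces $\gamma(F^{(2)})=\gamma-t$, and the number of exceptional minimum dominating sets of $F$ is at most the number of minimum dominating sets of $F^{(2)}$.

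Combining both pieces and invoking the inductive hypothesis on $F^{(1)}$ and $F^{(2)}$ (whose domination numbers are strictly less than $\gamma$), together with $s^{(1)}=s$, $s^{(2)}\ge s$, $\alpha<\beta$, and $t\ge 2$, would give
\[
f(\gamma,s)\;\le\;2\,f(\gamma-1,s)+f(\gamma-t,s^{(2)})\;\le\;\alpha^s\beta^{\gamma-s}\left(\frac{2}{\beta}+\frac{1}{\beta^2}\right).
\]
The defining cubic $\beta^3-\beta^2-4\beta+1=0$ together with the estimate $\beta\approx 2.4606$ yields $\beta>1+\sqrt{2}$, equivalently $\beta^2>2\beta+1$, so the bracketed factor is strictly less than $1$, contradicting the assumption $f(\gamma,s)>\alpha^s\beta^{\gamma-s}$. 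The main obstacle is the ``exceptional'' minimum dominating sets that the naive two-to-one correspondence with $F^{(1)}$ misses: choosing the right second auxiliary forest $F^{(2)}=F-V(T_w)$ (so that $w'$ is preserved and no strong support vertex is lost) is precisely what is needed to control them, and the final inequality hinges on the estimate $2/\beta+1/\beta^2<1$ that the cubic defining $\beta$ just barely delivers.
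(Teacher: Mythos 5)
Your argument is correct, and it takes a genuinely different route from the paper. The paper deletes both pendant paths below $w$ (the sets $\{u,v,u',v'\}$, possibly together with $w$), splits the minimum dominating sets according to their trace on $\{u,v,u',v'\}$, and handles the case ``$w$ already dominated from outside'' by attaching two new endvertices to $w$; this yields $f(\gamma,s)\le f(\gamma-2,s^{(1)})+3f(\gamma-2,s^{(2)})+3f(\gamma-2,s^{(3)})$, which is exactly the tight constraint $\frac{4}{\beta^2}+\frac{3\alpha}{\beta^3}=1$ that determines $\beta$. You instead delete only one pendant path $\{u_t,v_t\}$ and control the sets missed by the resulting two-to-one correspondence (those in which $v_t$ is the sole dominator of $w$) by mapping them injectively into the dominating sets of size $\gamma-t$ of $F-V(T_w)$; all the supporting facts check out ($|D\cap\{u_t,v_t\}|=1$ for every minimum dominating set, $\gamma(F^{(1)})\ge\gamma-1$ and $\gamma(F^{(2)})\ge\gamma-t$ so the images are indeed minimum dominating sets when they exist, $s^{(1)}=s$ and $s^{(2)}\ge s$ since by Claims \ref{claim1} and \ref{claim2} no vertex of $T_w$ is a strong support vertex, and the exceptional sets determine their preimage uniquely). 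This gives $f(\gamma,s)\le 2f(\gamma-1,s)+f(\gamma-t,s^{(2)})\le\left(\frac{2}{\beta}+\frac{1}{\beta^2}\right)\alpha^s\beta^{\gamma-s}$, and since $p(1+\sqrt{2})=1-\sqrt{2}<0$ for $p(x)=x^3-x^2-4x+1$ while $\beta$ is the largest root, indeed $\beta>1+\sqrt{2}$ and the factor is strictly below $1$ (about $0.978$). So your recursion is not only valid but strictly slack at the paper's $\beta$, i.e., it is a sharper treatment of precisely the bottleneck case the paper flags as improvable; used globally it would shift the binding constraint elsewhere and lower the constant in Theorem \ref{theorem1}. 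Two small points to tighten: justify that $w$ has a parent $w'$ (if $w$ were the root, which is an endvertex, then $T$ would be a star, contradicting $\gamma(T)\ge 2$ --- the paper makes this observation only after Claim \ref{claim3}, but it does not depend on it), and derive $\beta>1+\sqrt{2}$ from the sign of the cubic rather than from the numerical approximation $\beta\approx 2.4606$.
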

\begin{proof}[Proof of Claim \ref{claim3}]
Suppose, for a contradiction, that $w$ has a child $v'$ distinct from $v$.
By Claims \ref{claim1} and \ref{claim2}, 
the vertex $v'$ has a unique child $u'$.
By the choice of $u$, the vertex $u'$ is an endvertex.
Let $F^{(1)}=F-\{ u,v,u',v'\}$,
let $F^{(2)}$ arise from $F^{(1)}$ by attaching two new endvertices to $w$, and
let $F^{(3)}=F-\{ u,v,w,u',v'\}$.
Note that 
$s^{(1)}\geq s$,
$s^{(2)}=s+1$, and
$s^{(3)}\geq s$.
A set $D$ is a minimum dominating set of $F$
if and only if 
\begin{itemize}
\item either $D=D'\cup \{ u,u'\}$,
where $D'$ is a minimum dominating set of cardinality $\gamma-2$ 
of $F^{(1)}$,
\item or $D\in \Big\{ D'\cup \{ u,v'\},D'\cup \{ u',v\},D'\cup \{ v,v'\}\Big\}$,
where $D'$ is a minimum dominating set of cardinality $\gamma-2$ of 
either $F^{(2)}$ or $F^{(3)}$.
\end{itemize}
By the choice of $\gamma$ and $s$, this implies 
\begin{eqnarray*}
f(\gamma,s) & \leq & 
f\left(\gamma-1,s^{(1)}\right)
+3f\left(\gamma-1,s^{(2)}\right)
+3f\left(\gamma-1,s^{(3)}\right)\\
& \leq & 
\alpha^{s^{(1)}}\beta^{\gamma-s^{(1)}-2}
+3\alpha^{s^{(2)}}\beta^{\gamma-s^{(2)}-2}
+3\alpha^{s^{(3)}}\beta^{\gamma-s^{(3)}-2}\\
& \leq & 
4\alpha^s\beta^{\gamma-s-2}
+3\alpha^{s+1}\beta^{\gamma-s-3}\\
& = & \left(\frac{4}{\beta^2}+\frac{3\alpha}{\beta^3}\right)
\alpha^s\beta^{\gamma-s}\\
& = & \alpha^s\beta^{\gamma-s},
\end{eqnarray*}
where the last equality follows from the choice of $\alpha$ and $\beta$.
This contradiction completes the proof of the claim.
\end{proof}
Since $T$ has domination number at least $2$,
the vertex $w$ has a parent $x$.

\begin{claim}\label{claim4}
$x$ is not a support vertex of $T$.
\end{claim}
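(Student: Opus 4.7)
The plan is to follow the template of Claims~\ref{claim1}--\ref{claim3}: assuming for contradiction that $x$ is a support vertex with endvertex neighbor $y$, I enumerate the intersection $D\cap\{u,v,w,x,y\}$ over minimum dominating sets $D$ of $F$, construct a few reduced forests $F^{(i)}$ capturing each configuration, and derive a contradiction via the inductive bound $f(\gamma',s')\leq\alpha^{s'}\beta^{\gamma'-s'}$ for $\gamma'<\gamma$. Since $D$ must meet both $\{u,v\}$ and $\{x,y\}$, and since $N_T(w)\subseteq\{v,x\}$ by Claims~\ref{claim2} and~\ref{claim3} (so $\{u,y\}$ would leave $w$ undominated), the only minimum-cardinality possibilities for $D\cap\{u,v,w,x,y\}$ are $\{u,x\}$, $\{v,x\}$, and $\{v,y\}$.

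I then split on whether $x$ is a strong support vertex of $F$. In the strong-support case, every minimum dominating set of $F$ must contain $x$: otherwise $D$ would contain all (at least two) endvertex neighbors of $x$, and replacing them by $x$ would give a strictly smaller dominating set. Hence only the configurations $\{u,x\}$ and $\{v,x\}$ occur. I let $F^{(1)}$ arise from $F-\{u,v,w,y\}$ by attaching two new endvertices to $x$, which forces $x$ into every minimum dominating set of $F^{(1)}$; each minimum dominating set of $F^{(1)}$ of cardinality $\gamma-1$ then lifts to exactly two minimum dominating sets of $F$ (adjoin $u$ or adjoin $v$). Bookkeeping gives $s^{(1)}\geq s$, so
\[
f(\gamma,s)\leq 2f(\gamma-1,s^{(1)})\leq\frac{2}{\beta}\,\alpha^s\beta^{\gamma-s}<\alpha^s\beta^{\gamma-s},
\]
since $\beta>2$, the desired contradiction.

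If $x$ is not a strong support vertex, so that $y$ is its unique endvertex neighbor, all three configurations can occur, and I additionally set $F^{(2)}=F-\{u,v,w,x,y\}$: minimum dominating sets of $F^{(2)}$ of cardinality $\gamma-2$ correspond bijectively to minimum dominating sets of $F$ with $D\cap\{u,v,w,x,y\}=\{v,y\}$ (via adjoining $\{v,y\}$). Now $x$ becomes a \emph{new} strong support vertex in $F^{(1)}$, so $s^{(1)}\geq s+1$, while $s^{(2)}\geq s$, yielding
\[
f(\gamma,s)\leq 2f(\gamma-1,s+1)+f(\gamma-2,s)\leq(2\alpha+1)\,\alpha^s\beta^{\gamma-s-2}.
\]
It then suffices to verify $(2\alpha+1)/\beta^2\leq 1$; substituting $\alpha=\beta/(\beta-1)$ reduces this to $\beta^3-\beta^2-3\beta+1\geq 0$, and since the defining equation $\beta^3-\beta^2-4\beta+1=0$ gives $\beta^3-\beta^2-3\beta+1=\beta>0$, the inequality is strict and the contradiction follows. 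The main obstacle I anticipate is the careful bookkeeping of the strong support vertex counts $s^{(i)}$ in each reduced forest---using Claim~\ref{claim1} to exclude $v$ from $S(F)$, verifying that no endvertex neighbor of any strong support vertex of $F$ other than $v$ or $x$ lies in $\{u,v,w,x,y\}$ so that all $s$ (respectively $s-1$) strong support vertices of $F$ persist, and checking that the two-endvertex attachment at $x$ supplies precisely the extra strong support that is needed.
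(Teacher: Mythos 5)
Your argument is sound and reaches a valid contradiction, but it takes a more elaborate route than the paper's own proof of this claim. The paper never tracks $x$'s membership in $D$ at all: since every minimum dominating set contains exactly one of $u,v$, it simply splits on $u\in D$ versus $v\in D$, using $F^{(1)}=F-\{u,v\}$ and $F^{(2)}=F-\{u,v,w\}$; the hypothesis that $x$ is a support vertex is used only to ensure that whenever $v\in D$ one has $w\notin D$ and $x$ is dominated from inside $F-\{u,v,w\}$. Both reductions stay at domination number $\gamma-1$ with $s^{(i)}\geq s$, giving $f(\gamma,s)\leq 2\alpha^s\beta^{\gamma-s-1}<\alpha^s\beta^{\gamma-s}$, which needs only $\beta>2$ --- no case distinction on whether $x$ is a strong support vertex, no attachment of new endvertices, and no drop to $\gamma-2$. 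Your version instead enumerates the trace on $\{u,v,w,x,y\}$ and splits on $x$ being a strong support vertex; your bookkeeping $s^{(1)}\geq s$ (respectively $s^{(1)}\geq s+1$) and $s^{(2)}\geq s$ is correct, and the inequality $2\alpha+1<\beta^2$ does follow from $\beta^3-\beta^2-4\beta+1=0$ exactly as you compute. So your extra machinery buys nothing here, but it does deliver the contradiction.

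One step you should justify explicitly: the trace enumeration. Your stated reasons (one of $u,v$, one of $x,y$, and $\{u,y\}$ leaving $w$ undominated) do not exclude $D\cap\{u,v,w,x,y\}=\{u,w,y\}$, which is locally irredundant (no single vertex of it can be deleted), so simple minimality does not kill it. It is excluded only because $D$ is minimum: $N_F[u]\cup N_F[w]\cup N_F[y]\subseteq N_F[v]\cup N_F[x]$, so replacing $\{u,w,y\}$ by $\{v,x\}$ would yield a dominating set of cardinality $\gamma-1$. The remaining size-three traces are excluded by ordinary minimality, but this one needs the exchange; without it, minimum dominating sets with that trace would be missed by your count $2f(\gamma-1,s+1)+f(\gamma-2,s)$.
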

\begin{proof}[Proof of Claim \ref{claim4}]
Suppose, for a contradiction, that $x$ is a support vertex.
Let $F^{(1)}=F-\{ u,v\}$,
and let $F^{(2)}=F-\{ u,v,w\}$.
Note that 
$s^{(1)}\geq s$, and
$s^{(2)}=s$.
A set $D$ is a minimum dominating set of $F$
if and only if 
\begin{itemize}
\item either $D=D'\cup \{ u\}$,
where $D'$ is a minimum dominating set of cardinality $\gamma-1$ 
of $F^{(1)}$,
\item or $D=D'\cup \{ v\}$,
where $D'$ is a minimum dominating set of cardinality $\gamma-1$ 
of $F^{(2)}$.
\end{itemize}
By the choice of $\gamma$ and $s$, this implies 
\begin{eqnarray*}
f(\gamma,s) & \leq & 
f\left(\gamma-1,s^{(1)}\right)
+f\left(\gamma-1,s^{(2)}\right)\\
& \leq & 
\alpha^{s^{(1)}}\beta^{\gamma-s^{(1)}-1}
+\alpha^{s^{(2)}}\beta^{\gamma-s^{(2)}-1}\\
& \leq & 2\alpha^s\beta^{\gamma-s-1}\\
& < & \alpha^s\beta^{\gamma-s}.
\end{eqnarray*}
This contradiction completes the proof of the claim.
\end{proof}

\begin{claim}\label{claim5}
$x$ has exactly one child in $T$.
\end{claim}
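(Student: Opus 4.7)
I will prove Claim 5 by contradiction: assume that $x$ has a child $w'\neq w$ and derive $f(\gamma,s)\le\alpha^s\beta^{\gamma-s}$, contradicting our standing hypothesis $f(\gamma,s)>\alpha^s\beta^{\gamma-s}$.

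Step 1 (structure of the subtree $T_{w'}$ rooted at $w'$). By Claim 4, $w'$ is not an endvertex, and by the maximum depth of $u$, every descendant of $w'$ lies at depth at most $2$ below $w'$. If every child of $w'$ is an endvertex, then $w'$'s unique non-endvertex neighbor is $x$; the argument of Claim 1, applied with $w'$ in the role of $v$ and $x$ in the role of $w$, then forces $w'$ to have exactly one endvertex child $v''$, yielding case (a): $T_{w'}$ is the edge $w'v''$. Otherwise $w'$ has a non-endvertex child $v'$; all children of $v'$ are endvertices by the maximum depth of $u$, and $v'$'s unique non-endvertex neighbor is $w'$, so Claims 1 and 2 applied to $v'$ give that $v'$ has a unique endvertex child $u'$ and that $w'$ is not a support vertex. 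The argument of Claim 3, rerun with $w'$ in the role of $w$, then shows that $w'$ has exactly one non-endvertex child; this is case (b): $T_{w'}$ is the path on three vertices $w',v',u'$, mirroring $T_w$.

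Step 2 (reductions). In each case I would introduce auxiliary forests $F^{(1)},F^{(2)},\ldots$ obtained from $F$ by deleting $T_{w'}$, or $T_w\cup T_{w'}$, or $T_w\cup T_{w'}\cup\{x\}$, and, where appropriate, by attaching two new endvertices to $x$ so as to force $x$ into every minimum dominating set of the auxiliary forest, exactly as in the proof of Claim 1. Every minimum dominating set $D$ of $F$ is then described by a pair $(D\cap V(T_w),\,D\cap V(T_{w'}))$ drawn from a finite list of options, together with a set $D_0=D\cap V(F^{(i)})$ that is a minimum dominating set of the appropriate auxiliary forest at the appropriate cardinality.

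Step 3 (assembly). Summing over all feasible $(D\cap V(T_w),D\cap V(T_{w'}))$ yields an inequality
\[
f(\gamma,s)\;\le\;\sum_i c_i\,f\bigl(\gamma-k_i,s^{(i)}\bigr),
\]
where $s^{(i)}\ge s$ in general, and $s^{(i)}\ge s+1$ whenever two new endvertices have been attached to $x$. Applying the inductive hypothesis $f(\gamma-k_i,s^{(i)})\le\alpha^{s^{(i)}}\beta^{\gamma-k_i-s^{(i)}}$ turns the right-hand side into a fixed linear combination of powers of $1/\alpha$ and $1/\beta$, and the defining identities $\alpha=\beta/(\beta-1)$ and $\beta^3-\beta^2-4\beta+1=0$ make this combination equal to $\alpha^s\beta^{\gamma-s}$, yielding the desired contradiction.

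The main obstacle is the book-keeping in case (b): the two symmetric $P_3$-subtrees at $w$ and $w'$ interact through the single hinge $x$, and the sub-cases split according to whether $x$ is dominated by itself, by $w$, by $w'$, or from the outside. Several sub-cases are infeasible, because they would correspond to a dominating set of $F$ of cardinality strictly less than $\gamma$ and therefore contribute zero minimum dominating sets; correctly identifying these zero-contributions is what makes the final count balance against $\alpha^s\beta^{\gamma-s}$.
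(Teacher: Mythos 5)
Your Step 1 is sound and coincides exactly with the paper's case split: either $w'$ has a non-endvertex child, and then Claims 1--3 applied to the triple $(u',v',w')$ (legitimate because $u'$ also has maximum depth) show that the subtree at $w'$ is a path $w'v'u'$ mirroring $wvu$; or all children of $w'$ are endvertices, and Claim 1 applied to $w'$ (with $x$ in the role of $w$) leaves a single pendant edge at $w'$. The genuine gap is that Steps 2 and 3, which contain the actual substance of the claim, are only announced, not carried out. You never specify the auxiliary forests, the coefficients $c_i$, the cardinality shifts $k_i$, or the strong-support counts $s^{(i)}$, and you never verify the resulting numerical inequality. That verification is not a formality that the identities $\alpha=\beta/(\beta-1)$ and $\beta^3-\beta^2-4\beta+1=0$ discharge automatically: the induction closes only because each configuration produces a combination $\sum_i c_i\alpha^{s^{(i)}-s}\beta^{-k_i-(s^{(i)}-s)}$ that is at most the extremal value $\frac{4}{\beta^2}+\frac{3\alpha}{\beta^3}=1$ coming from Claim 3, and in the present claim the two cases in fact come out \emph{strictly} below it (the mirrored case gives $\frac{3}{\beta^2}+\frac{\alpha}{\beta^3}$, the pendant case $\frac{4}{\beta^2}+\frac{\alpha}{\beta^3}$), not ``equal to $\alpha^s\beta^{\gamma-s}$'' as your Step 3 predicts. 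Had the bookkeeping produced, say, four terms with the pendant-attachment gain instead of one, the bound would exceed $1$ and the argument would collapse; so computing these constants is precisely the proof, and you explicitly defer it (``the main obstacle \ldots'').

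A second, related omission: the ``if and only if'' description of minimum dominating sets by their traces on the deleted vertices requires minimality arguments that you only gesture at. For instance, in the mirrored case one must show that exactly one of $u,v$ and one of $u',v'$ lies in $D$ and that $w\in D$ is impossible once $v\in D$ (otherwise $D\setminus\{u,v,u',v'\}$ or $D\setminus\{u,v,w,u',v'\}$ need not live in the intended auxiliary forest), while in your case (a) the pendant trick at $x$ is needed only in the single sub-case $\{v,w'\}\subseteq D$, split according to whether $x\in D$. You also need the structural observation that deleting $\{u,v,u',v'\}$ turns $x$ into a strong support vertex (giving $s^{(1)}\ge s+1$ without any attachment), which your accounting rule ``$s^{(i)}\ge s+1$ only when pendants are attached'' misses. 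As written, the proposal is a correct plan whose decisive enumeration and arithmetic remain to be done.
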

\begin{proof}[Proof of Claim \ref{claim5}]
Suppose, for a contradiction, that $x$ has a child $w'$ distinct from $w$.
By Claim \ref{claim4}, the vertex $w'$ has a child $v'$.

First, we assume that the vertex $v'$ has a child $u'$.
By the choice of $u$,
the vertices $u$ and $u'$ are endvertices of the same depth.
Therefore, by symmetry between $(u,v,w)$ and $(u',v',w')$, we obtain
that $v'$ is the only child of $w'$,
and that $u'$ is the only child of $v'$.
Let $F^{(1)}=F-\{ u,v,u',v'\}$,
let $F^{(2)}=F-\{ u,v,w,u',v'\}$,
let $F^{(3)}=F-\{ u,v,u',v',w'\}$, and
let $F^{(4)}=F-\{ u,v,w,u',v',w'\}$.
Note that 
$s^{(1)}\geq s+1$,
$s^{(2)}=s$,
$s^{(3)}=s$, and
$s^{(4)}\geq s$.
A set $D$ is a minimum dominating set of $F$
if and only if 
\begin{itemize}
\item either $D=D'\cup \{ u,u'\}$,
where $D'$ is a minimum dominating set of cardinality $\gamma-2$ 
of $F^{(1)}$,
\item or $D=D'\cup \{ v,u'\}$,
where $D'$ is a minimum dominating set of cardinality $\gamma-2$ 
of $F^{(2)}$,
\item or $D=D'\cup \{ u,v'\}$,
where $D'$ is a minimum dominating set of cardinality $\gamma-2$ 
of $F^{(3)}$,
\item or $D=D'\cup \{ v,v'\}$,
where $D'$ is a minimum dominating set of cardinality $\gamma-2$ 
of $F^{(4)}$.
\end{itemize}
By the choice of $\gamma$ and $s$, this implies 
\begin{eqnarray*}
f(\gamma,s) & \leq & 
f\left(\gamma-2,s^{(1)}\right)
+f\left(\gamma-2,s^{(2)}\right)
+f\left(\gamma-2,s^{(3)}\right)
+f\left(\gamma-2,s^{(4)}\right)\\
& \leq & 
\alpha^{s^{(1)}}\beta^{\gamma-s^{(1)}-2}
+\alpha^{s^{(2)}}\beta^{\gamma-s^{(2)}-2}
+\alpha^{s^{(3)}}\beta^{\gamma-s^{(3)}-2}
+\alpha^{s^{(4)}}\beta^{\gamma-s^{(4)}-2}\\
& \leq & 
3\alpha^{s}\beta^{\gamma-s-2}
+\alpha^{s+1}\beta^{\gamma-s-3}\\
& = & \left(\frac{3}{\beta^2}+\frac{\alpha}{\beta^3}\right)
\alpha^s\beta^{\gamma-s}\\
& < & \left(\frac{4}{\beta^2}+\frac{3\alpha}{\beta^3}\right)
\alpha^s\beta^{\gamma-s}\\
& = & \alpha^s\beta^{\gamma-s}.
\end{eqnarray*}
Hence, we may assume that every child of $w'$ is an endvertex.
By Claim \ref{claim1}, the vertex $v'$ is the only child of $w'$.
Let $F^{(1)}=F-\{ u,v,v',w'\}$,
let $F^{(2)}=F-\{ u,v,w,v',w'\}$,
let $F^{(3)}$ arise from $F^{(2)}$ by attaching two new endvertices to $x$, and
let $F^{(4)}=F-\{ u,v,w,x,v',w'\}$.
Note that 
$s^{(1)}=s$,
$s^{(2)}\geq s$,
$s^{(3)}=s+1$, and
$s^{(4)}\geq s$.
A set $D$ is a minimum dominating set of $F$
if and only if 
\begin{itemize}
\item either $D\in \Big\{ D'\cup \{ u,v'\},D'\cup \{ u,w'\}\Big\}$,
where $D'$ is a minimum dominating set of cardinality $\gamma-2$ 
of $F^{(1)}$,
\item or $D=D'\cup \{ v,v'\}$,
where $D'$ is a minimum dominating set of cardinality $\gamma-2$ 
of $F^{(2)}$,
\item or $D=D'\cup \{ v,w'\}$,
where $D'$ is a minimum dominating set of cardinality $\gamma-2$ 
of either $F^{(3)}$ or $F^{(4)}$.
\end{itemize}
By the choice of $\gamma$ and $s$, this implies 
\begin{eqnarray*}
f(\gamma,s) & \leq & 
2f\left(\gamma-2,s^{(1)}\right)
+f\left(\gamma-2,s^{(2)}\right)
+f\left(\gamma-2,s^{(3)}\right)
+f\left(\gamma-2,s^{(4)}\right)\\
& \leq & 
2\alpha^{s^{(1)}}\beta^{\gamma-s^{(1)}-2}
+\alpha^{s^{(2)}}\beta^{\gamma-s^{(2)}-2}
+\alpha^{s^{(3)}}\beta^{\gamma-s^{(3)}-2}
+\alpha^{s^{(4)}}\beta^{\gamma-s^{(4)}-2}\\
& \leq & 
4\alpha^{s}\beta^{\gamma-s-2}
+\alpha^{s+1}\beta^{\gamma-s-3}\\
& = & \left(\frac{4}{\beta^2}+\frac{\alpha}{\beta^3}\right)
\alpha^s\beta^{\gamma-s}\\
& < & \left(\frac{4}{\beta^2}+\frac{3\alpha}{\beta^3}\right)
\alpha^s\beta^{\gamma-s}\\
& = & \alpha^s\beta^{\gamma-s}.
\end{eqnarray*}
This contradiction completes the proof of the claim.
\end{proof}
By Claim \ref{claim2}, the vertex $x$ is not an endvertex.
Therefore, by Claims \ref{claim4} and \ref{claim5},
the vertex $x$ has a parent $y$.

\begin{claim}\label{claim6}
$y$ is not a support vertex of $T$.
\end{claim}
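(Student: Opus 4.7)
My plan is to derive a contradiction from the assumption that $y$ is a support vertex of $T$, by introducing two reduced forests, proving a key lower bound on the first reduced forest's domination number, and summing the contributions of the minimality-consistent local configurations.

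Let $u''$ denote an endvertex of $T$ adjacent to $y$. I would let $F^{(1)}$ arise from $F - \{u,v,w,x,u''\}$ by attaching two new endvertices to $y$, and $F^{(2)} = F - \{u,v,w,x,u'',y\}$. By construction, $y$ is a strong support vertex of $F^{(1)}$ and hence lies in every minimum dominating set of $F^{(1)}$, and a routine bookkeeping gives $s^{(1)} \geq s$ and $s^{(2)} \geq s$. The key preliminary is the inequality $\gamma(F^{(1)}) \geq \gamma - 1$: if $D^*$ is a minimum dominating set of $F^{(1)}$ (so $y \in D^*$), then $D^* \cup \{v\}$ dominates $F$, because $v$ covers $\{u,v,w\}$, $y$ covers $\{x, u''\}$, and $D^*$ covers $V(F) \setminus \{u,v,w,x,u''\}$; if $|D^*| \leq \gamma-2$, this would yield a dominating set of $F$ of size less than $\gamma$, contradicting $\gamma(F) = \gamma$.

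Next, I would carry out a case analysis on $D \cap \{u,v,w,x,u'',y\}$ for an arbitrary minimum dominating set $D$ of $F$. Since $u$ requires either $u$ or $v$ in $D$ and $u''$ requires either $u''$ or $y$, a systematic minimality check yields exactly seven local configurations:
\[
\{v,y\},\ \{u,w,y\},\ \{u,x,y\},\ \{v,x,u''\},\ \{v,w,u''\},\ \{u,w,u''\},\ \{u,x,u''\}.
\]
For each, the restriction $D' := D \cap V(F^{(1)})$ when $y \in D$, or $D' := D \cap V(F^{(2)})$ when $y \notin D$, is a dominating set of the corresponding reduced forest of size $\gamma-1$, $\gamma-2$, or $\gamma-3$, according as the local set above has size $1$, $2$, or $3$. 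The two patterns $\{u,w,y\}$ and $\{u,x,y\}$ would produce $D'$ of size $\gamma-2$ in $F^{(1)}$, which is ruled out by $\gamma(F^{(1)}) \geq \gamma-1$. The remaining five yield
\[
f(\gamma,s) \leq f(\gamma-1, s^{(1)}) + 4 f(\gamma-3, s^{(2)}) \leq \alpha^s \beta^{\gamma-s}\left(\frac{1}{\beta} + \frac{4}{\beta^3}\right) = \alpha^s \beta^{\gamma-s}\cdot\frac{\beta^2+4}{\beta^3}.
\]
Using the defining identity $\beta^3 = \beta^2 + 4\beta - 1$, the inequality $\beta^2+4 < \beta^3$ reduces to $4\beta > 5$, which holds at $\beta \approx 2.4606$, so $f(\gamma,s) < \alpha^s \beta^{\gamma-s}$, contradicting the assumption $f(\gamma,s) > \alpha^s \beta^{\gamma-s}$.

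The main obstacle I anticipate is the pattern enumeration: checking exhaustively that the seven listed configurations are precisely the minimality-consistent ones, that each extension $D' \cup (\text{local set})$ gives a true minimum dominating set of $F$, and that the restrictions $D \cap V(F^{(i)})$ are indeed dominating sets of the reduced forests (in particular accounting for how $y$ is dominated when $y \notin D$, via either $x$ or $u''$ in $D$). A secondary challenge is confirming $s^{(1)} \geq s$ by handling the small case $\deg_F(y) \leq 2$ carefully, where attaching the two endvertices to $y$ may interact with other vertices' support status.
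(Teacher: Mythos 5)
Your overall plan works and reaches a valid contradiction, but it takes a genuinely different and heavier route than the paper. The paper's proof of this claim is a one-step reduction: it observes that, with $x'$ an endvertex neighbour of $y$, \emph{every} minimum dominating set $D$ of $F$ satisfies $D\cap\{u,v,w,x,y,x'\}=\{v,y\}$ (a short exchange argument using that $v$ and $y$ together dominate everything that any vertices of $\{u,v,w,x,x'\}$ can dominate), so minimum dominating sets of $F$ correspond bijectively to minimum dominating sets of cardinality $\gamma-1$ of $F-\{u,v,w\}$, giving $f(\gamma,s)\leq f(\gamma-1,s^{(1)})\leq \alpha^{s}\beta^{\gamma-s-1}<\alpha^s\beta^{\gamma-s}$ at cost $\tfrac{1}{\beta}$. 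You instead enumerate seven locally irredundant patterns, prove $\gamma(F^{(1)})\geq\gamma-1$ to kill two of them, and pay $\tfrac{1}{\beta}+\tfrac{4}{\beta^3}<1$ for the rest; this is correct in principle but retains four cases that in fact cannot occur: exactly the same exchange ($\{v,y\}$ dominates the closed neighbourhood of any of your size-three patterns) shows $\gamma(F^{(2)})\geq\gamma-2$, so there are no dominating sets of $F^{(2)}$ of cardinality $\gamma-3$ at all.

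Two loose ends in your write-up need patching. First, your bound $4f(\gamma-3,s^{(2)})$ tacitly assumes that the size-$(\gamma-3)$ restrictions are \emph{minimum} dominating sets of $F^{(2)}$; this requires $\gamma(F^{(2)})\geq\gamma-3$, which you do not verify (it holds, as above, by adding $\{v,y\}$ to any dominating set of $F^{(2)}$ -- and in fact makes those four terms vanish). Second, the claim $s^{(2)}\geq s$ is false when $y$ is a strong support vertex of $F$ (then deleting $y$ may destroy a strong support vertex, so only $s^{(2)}\geq s-1$ is guaranteed); either dispose of that case first (then $y$ lies in every minimum dominating set, so the $y$-free configurations never arise), or redo the estimate with $s^{(2)}\geq s-1$, which still gives $\tfrac{1}{\beta}+\tfrac{4}{\alpha\beta^2}<1$. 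With these repairs your argument is sound, though noticeably longer and looser than the paper's single reduction.
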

\begin{proof}[Proof of Claim \ref{claim6}]
Suppose, for a contradiction, that $y$ is a support vertex.
Let $x'$ be a neighbor of $y$ that is an endvertex.
For every minimum dominating set $D$ of $F$,
we obtain $D\cap \{ u,v,w,x,y,x'\}=\{ v,y\}$.
Let $F^{(1)}=F-\{ u,v,w\}$.
Note that $s^{(1)}\geq s$.
A set $D$ is a minimum dominating set of $F$
if and only if 
$D=D'\cup \{ v\}$,
where $D'$ is a minimum dominating set of cardinality $\gamma-1$ 
of $F^{(1)}$.
By the choice of $\gamma$ and $s$, this implies 
\begin{eqnarray*}
f(\gamma,s) & \leq & 
f\left(\gamma-1,s^{(1)}\right)
\leq 
\alpha^{s^{(1)}}\beta^{\gamma-s^{(1)}-1}
\leq
\alpha^{s}\beta^{\gamma-s-1}
<
\alpha^{s}\beta^{\gamma-s}.
\end{eqnarray*}
This contradiction completes the proof of the claim.
\end{proof}
We are now in a position to derive a final contradiction.
Let $F^{(1)}=F-\{ u,v,w,x\}$,
let $F^{(2)}=F-\{ u,v,w,x,y\}$, and
let $F^{(3)}=F-\{ u,v,w\}$.
Note that 
$s^{(1)}\geq s$,
$s^{(2)}\geq s$, and
$s^{(3)}=s$.
A set $D$ is a minimum dominating set of $F$
if and only if 
\begin{itemize}
\item either $D\in \Big\{ D'\cup \{ u,w\},D'\cup \{ v,w\}\Big\}$,
where $D'$ is a minimum dominating set of cardinality $\gamma-2$ 
of $F^{(1)}$,
\item or $D=D'\cup \{ u,x\}$,
where $D'$ is a minimum dominating set of cardinality $\gamma-2$ 
of $F^{(2)}$,
\item or $D=D'\cup \{ v\}$,
where $D'$ is a minimum dominating set of cardinality $\gamma-1$ 
of $F^{(3)}$.
\end{itemize}
By the choice of $\gamma$ and $s$, this implies 
\begin{eqnarray*}
f(\gamma,s) & \leq & 
2f\left(\gamma-2,s^{(1)}\right)
+f\left(\gamma-2,s^{(2)}\right)
+f\left(\gamma-1,s^{(3)}\right)\\
& \leq & 
2\alpha^{s^{(1)}}\beta^{\gamma-s^{(1)}-2}
+\alpha^{s^{(2)}}\beta^{\gamma-s^{(2)}-2}
+\alpha^{s^{(3)}}\beta^{\gamma-s^{(3)}-1}\\
& \leq & 
3\alpha^{s}\beta^{\gamma-s-2}
+\alpha^{s}\beta^{\gamma-s-1}\\
& = & \left(\frac{3}{\beta^2}+\frac{1}{\beta}\right)
\alpha^s\beta^{\gamma-s}\\
& < & \left(\frac{4}{\beta^2}+\frac{3\alpha}{\beta^3}\right)
\alpha^s\beta^{\gamma-s}\\
& = & \alpha^s\beta^{\gamma-s}.
\end{eqnarray*}
This completes the proof.
\end{proof}
Inspection of the proof of Theorem \ref{theorem1}
shows that Claim \ref{claim3} yields the largest lower bound on $\beta$.
Further minor improvements of the bound seem possible 
by a finer case analysis for the situation considered in this claim.

We proceed to the proof of our second result.

For a non-negative integer $k$,
let $T(k)$ arise by subdividing $k$ edges  
of a star of order $k+2$ once.
Note that $T(k)$ has independence number $k+1$.

\begin{proof}[Proof of Theorem \ref{theorem2}]
Within this proof, 
we call a graph {\it special}
if it is isomorphic to $T(k)$ for some non-negative integer $k$.
Suppose, for a contradiction, that the theorem is false,
and let $\alpha$ be the smallest value of the independence number 
for which it fails.
Let $T$ be a tree with independence number $\alpha$ 
such that 
\begin{itemize}
\item either $\sharp\alpha(T)>2^{\alpha-1}+1$
\item or $\sharp\alpha(T)=2^{\alpha-1}+1$
but $T$ is not special,
\end{itemize}
where $\sharp\alpha(T)$ denotes 
the number of maximum independent sets of $T$.

It is easy to see that $T$ 
is not special and has diameter at least $4$,
which implies $\alpha\geq 3$.
We root $T$ at an endvertex of a longest path in $T$.
Let $u$ be an endvertex of maximum depth in $T$,
let $v$ be the parent of $u$,
let $w$ be the parent of $v$, and
let $x$ be the parent of $w$.

\setcounter{claim}{0}

\begin{claim}\label{claim1b}
$v$ is not a strong support vertex of $T$.
\end{claim}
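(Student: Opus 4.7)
The plan is a short proof by contradiction that exploits the minimality of $\alpha$. Suppose $v$ is a strong support vertex. Since $u$ has maximum depth in $T$, every child of $v$ is an endvertex, so the endvertex neighbors of $v$ are exactly some set $\{u_1,\dots,u_k\}=u_1,\dots,u_k$ with $k\geq 2$, and the only other neighbor of $v$ is $w$. First I would observe that no maximum independent set of $T$ contains $v$: if $I$ were a maximum independent set with $v\in I$, then $u_1,\dots,u_k\notin I$, and replacing $v$ by $u_1,\dots,u_k$ produces an independent set of size $|I|+k-1\geq |I|+1$, contradicting maximality. Consequently, for every maximum independent set $I$ of $T$, each $u_i$ can (and thus must, by maximality) be added, so $\{u_1,\dots,u_k\}\subseteq I$ and $v\notin I$.

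Next I would set $T''=T-\{v,u_1,\dots,u_k\}$. Since the only neighbors of $v$ outside $\{u_1,\dots,u_k\}$ lie in the single neighbor $w$, the graph $T''$ is a tree (the subtree obtained by deleting the entire pendant structure at $v$). The observation above gives a bijection between the maximum independent sets of $T$ and the maximum independent sets of $T''$ via $I\mapsto I\setminus\{u_1,\dots,u_k\}$, with inverse $J\mapsto J\cup\{u_1,\dots,u_k\}$; one checks that the image is indeed maximum in $T''$ because any strictly larger independent set of $T''$, reunited with $\{u_1,\dots,u_k\}$, would contradict $\alpha(T)=\alpha$. Hence
\[
\sharp\alpha(T)=\sharp\alpha(T'')\quad\text{and}\quad \alpha(T'')=\alpha-k\leq \alpha-2.
\]

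Finally I would invoke the minimality of $\alpha$. In either of the two possibilities listed in the definition of $T$, we have $\sharp\alpha(T)\geq 2^{\alpha-1}+1$, and therefore
\[
\sharp\alpha(T'')\;=\;\sharp\alpha(T)\;\geq\;2^{\alpha-1}+1\;>\;2^{\alpha-3}+1\;\geq\;2^{\alpha(T'')-1}+1.
\]
Thus $T''$ is a tree with independence number strictly less than $\alpha$ that violates the bound of Theorem \ref{theorem2} outright, contradicting the choice of $\alpha$ as the smallest independence number for which the theorem fails.

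I do not anticipate any serious obstacle here; the only thing to be careful about is verifying that $T''$ is a (nonempty) tree, and that the bijection between maximum independent sets is genuine in both directions, since the subsequent claims will lean on the same template of removing pendant structure and quoting minimality.
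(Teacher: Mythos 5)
Your proof is correct and follows essentially the same route as the paper: delete $N_T[v]\setminus\{w\}$, note every maximum independent set contains all endvertex children of $v$ but not $v$, and apply the minimality of $\alpha$ to the resulting tree, whose independence number drops by at least $2$. The paper only records the inequality $\sharp\alpha(T)\leq\sharp\alpha\big(T-(N_T[v]\setminus\{w\})\big)$ rather than your bijection, but the contradiction $2^{\alpha-1}+1\leq\sharp\alpha(T)\leq 2^{\alpha-3}+1$ is the same.
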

\begin{proof}[Proof of Claim \ref{claim1b}]
Suppose, for a contradiction,
that $v$ is a strong support vertex of $T$.
By the choice of $\alpha$, we obtain
\begin{eqnarray*}
\sharp\alpha(T)
&\leq &\sharp\alpha\big(T-(N_T[v]\setminus \{ w\})\big)\\
&\leq &2^{\alpha-(d_T(v)-1)-1}+1\\
&\leq &2^{\alpha-3}+1\\
&\leq &2^{\alpha-1}.
\end{eqnarray*}
This contradiction completes the proof of the claim.
\end{proof}
Let $w$ have $p$ children that are no endvertices
as well as $q$ children that are endvertices.
Let $Q$ be the set of children of $w$ that are endvertices,
and let $P$ be the set of all descendants of $w$ 
that are not in $Q$.
By Claim \ref{claim1b},
each child of $w$ in $P$ has exactly one child,
in particular, $|P|=2p$.
See Figure \ref{fig3}.

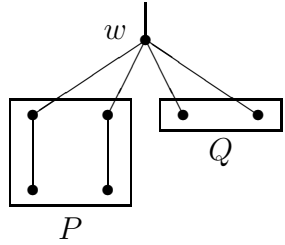
\begin{figure}[H]
\begin{center}
%TeXCAD Picture [3.pic]. Options:
%\grade{\on}
%\emlines{\off}
%\epic{\off}
%\beziermacro{\on}
%\reduce{\on}
%\snapping{\on}
%\pvinsert{% Your \input, \def, etc. here}
%\quality{8.000}
%\graddiff{0.005}
%\snapasp{1}
%\zoom{19.0273}
\unitlength 1mm % = 2.845pt
\linethickness{0.4pt}
\ifx\plotpoint\undefined\newsavebox{\plotpoint}\fi % GNUPLOT compatibility
\begin{picture}(38,30)(0,0)
\put(5,5){\circle*{1.5}}
\put(15,5){\circle*{1.5}}
\put(25,15){\circle*{1.5}}
\put(35,15){\circle*{1.5}}
\put(20,25){\circle*{1.5}}
\put(15,15){\circle*{1.5}}
\put(5,15){\circle*{1.5}}
\put(20,25){\line(0,1){5}}
\put(20,25){\line(1,-2){5}}
\put(35,15){\line(-3,2){15}}
\put(20,25){\line(-1,-2){5}}
\put(15,15){\line(0,-1){10}}
\put(20,25){\line(-3,-2){15}}
\put(5,15){\line(0,-1){10}}
\put(22,13){\framebox(16,4)[cc]{}}
\put(2,3){\framebox(16,14)[cc]{}}
\put(30,10){\makebox(0,0)[cc]{$Q$}}
\put(10,0){\makebox(0,0)[cc]{$P$}}
\put(16,26){\makebox(0,0)[cc]{$w$}}
\end{picture}
\end{center}
\caption{The vertex $w$ and its descendants.}\label{fig3}
\end{figure}
Let $T'=T-(\{ w\}\cup P\cup Q)$.

\begin{claim}\label{claim2b}
$q\in \{ 0,1\}$.
\end{claim}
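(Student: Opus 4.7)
The plan is to show that if $q \geq 2$, then $w$ is a strong support vertex, which constrains every maximum independent set of $T$ enough to compute $\sharp\alpha(T)$ in terms of $\sharp\alpha(T')$ and then derive a contradiction via the minimality of $\alpha$.

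First, I would assume for contradiction that $q \geq 2$. Since $w$ then has at least two endvertex children, every maximum independent set $I$ of $T$ must satisfy $Q \subseteq I$ and $w \notin I$; otherwise, removing $w$ from $I$ and inserting the missing elements of $Q$ would strictly enlarge $I$, or a missing element of $Q$ could simply be added. Next, by Claim \ref{claim1b}, each of the $p$ non-endvertex children $v_i$ of $w$ has a unique endvertex child $u_i$. Since $w \notin I$, the set $I \cap \{v_i, u_i\}$ contains at most one vertex by adjacency and at least one (otherwise $u_i$ could be added), so it contains exactly one. Crucially, this choice is entirely unconstrained by the rest of the tree, since $u_i$ has no neighbor outside $\{v_i\}$ and $v_i$'s only neighbor outside $\{u_i\}$ is $w \notin I$.

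Second, I would set $T' = T - (\{w\} \cup P \cup Q)$, which is a tree containing $x$ (hence non-empty) because $T$ has diameter at least $4$. The observations above show that $I$ is a maximum independent set of $T$ if and only if $I \cap V(T')$ is a maximum independent set of $T'$ and $I \cap (\{w\} \cup P \cup Q)$ is one of the $2^p$ admissible configurations (all of $Q$ together with one vertex from each pair $\{v_i, u_i\}$). Consequently $\alpha = p + q + \alpha(T')$ and $\sharp\alpha(T) = 2^p \cdot \sharp\alpha(T')$.

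Third, by the minimality of $\alpha$ applied to $T'$, whose independence number $\alpha' := \alpha(T')$ satisfies $1 \leq \alpha' < \alpha$, one obtains $\sharp\alpha(T') \leq 2^{\alpha' - 1} + 1$. Using $q \geq 2$ and $p = \alpha - \alpha' - q \leq \alpha - 3$, this yields
\[
\sharp\alpha(T) \leq 2^p\left(2^{\alpha'-1}+1\right) = 2^{\alpha - q - 1} + 2^p \leq 2^{\alpha-3} + 2^{\alpha-3} = 2^{\alpha-2},
\]
which, since $\alpha \geq 3$, is strictly smaller than $2^{\alpha-1}+1 \leq \sharp\alpha(T)$, the desired contradiction. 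The only real subtleties are verifying that the $2^p$ choices inside the subtree below $w$ are genuinely decoupled from the choice of maximum independent set in $T'$ (which follows cleanly from $w \notin I$) and that $T'$ is non-empty so that the inductive hypothesis applies (which follows from diameter at least $4$); neither step looks like a serious obstacle.
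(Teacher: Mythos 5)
Your proposal is correct and follows essentially the same route as the paper: bound $\sharp\alpha(T)$ by $2^p\cdot\sharp\alpha(T')$ using that $Q$ lies in and $w$ avoids every maximum independent set, apply the minimality of $\alpha$ to $T'$, and compute $2^{\alpha-q-1}+2^p\leq 2^{\alpha-2}<2^{\alpha-1}+1$. You merely spell out in more detail the decoupling of the $2^p$ choices below $w$ and the identity $\alpha=p+q+\alpha(T')$, which the paper leaves implicit.
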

\begin{proof}[Proof of Claim \ref{claim2b}]
Suppose, for a contradiction, that $q\geq 2$.
Since $Q$ is a subset of every maximum independent set of $T$,
and there are $2^p$ different ways 
in which a maximum independent set of $T$ 
can intersect $P$, we obtain, by the choice of $\alpha$,
\begin{eqnarray*}
\sharp\alpha(T)
&\leq &2^p\cdot \sharp\alpha(T')\\
&\leq &2^p\left(2^{\alpha-p-q-1}+1\right)\\
&\leq &2^{\alpha-q-1}+2^p\\
&\leq &2^{\alpha-3}+2^{\alpha-3}\\
&< &2^{\alpha-1}.
\end{eqnarray*}
This contradiction completes the proof of the claim.
\end{proof}

\begin{claim}\label{claim3b}
$q=0$.
\end{claim}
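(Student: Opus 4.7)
The plan is to proceed by contradiction: assume $q=1$, let $u'$ denote the unique endvertex child of $w$, and show that $T$ cannot be a counterexample, contradicting the choice of $T$.

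First I would partition the maximum independent sets $I$ of $T$ according to whether $w\in I$. Recall that each of the $p$ non-endvertex children of $w$ has a unique (endvertex) grandchild. If $w\in I$, then $x$, $u'$, and each child of $w$ in $P$ lie outside $I$, while maximality forces every grandchild of $w$ in $P$ into $I$; hence $I\cap V(T')$ is a maximum independent set of $T'$ avoiding $x$. If $w\notin I$, then $u'\in I$ by maximality, exactly one vertex of each child-grandchild pair in $P$ belongs to $I$ (giving $2^{p}$ choices), and $I\cap V(T')$ is an arbitrary maximum independent set of $T'$. Writing $a:=\sharp\alpha(T')$ and $c$ for the number of maximum independent sets of $T'$ not containing $x$, this yields the identity $\sharp\alpha(T)=c+2^{p}a$.

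Next I would invoke the induction hypothesis on the tree $T'$, whose independence number equals $\alpha-p-1<\alpha$: one has $a\leq 2^{\alpha-p-2}+1$, with equality if and only if $T'\cong T(\alpha-p-2)$. If $T'$ is not special, then $a\leq 2^{\alpha-p-2}$ and a short estimate yields $\sharp\alpha(T)\leq(2^{p}+1)\cdot 2^{\alpha-p-2}\leq 3\cdot 2^{\alpha-3}<2^{\alpha-1}+1$, so $T$ is not a counterexample. If $T'$ is special, I would observe from the explicit list of maximum independent sets of $T(k)$ that every vertex of $T(k)$ belongs to at least one such set, which gives $c\leq a-1$ and hence $\sharp\alpha(T)\leq 2^{\alpha-2}+2^{p}+2^{\alpha-p-2}$.

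The hard part is then the elementary inequality $2^{p}+2^{\alpha-p-2}\leq 2^{\alpha-2}+1$, which I would show is strict for $1\leq p\leq\alpha-3$ and tight exactly at $p=\alpha-2$. In the strict range one already gets $\sharp\alpha(T)<2^{\alpha-1}+1$, contradicting that $T$ is a counterexample. In the tight case $p=\alpha-2$, the facts that $T'$ is special and $\alpha(T')=1$ together force $T'\cong K_{2}=T(0)$, and unpacking the vertex set and edges of $T$ shows that $w$ has degree $\alpha$, with one endvertex neighbor $u'$ and $\alpha-1$ neighbors that each have a unique leaf neighbor; that is, $T\cong T(\alpha-1)$ is itself special, again contradicting the counterexample assumption and yielding $q=0$. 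The main obstacle I foresee is precisely this last identification, which crucially uses the equality characterization (not just the numerical bound) from the induction hypothesis.
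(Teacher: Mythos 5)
Your proof is correct, and it takes a genuinely different route through the same claim. The paper splits the maximum independent sets of $T$ according to their trace on $P$ (all grandchildren of $w$ versus at least one child of $w$), which gives the inequality $\sharp\alpha(T)\leq \sharp\alpha(T-P)+\left(2^p-1\right)\sharp\alpha(T')$ and lets it invoke the induction hypothesis for both $T-P$ and $T'$; it also rules out $T'\cong T(0)$ at the outset (since that would make $T$ special), so that when $T'$ is special it may assume $\alpha\geq p+3$, after which convexity estimates give $\sharp\alpha(T)\leq 2^{\alpha-1}$ in both cases. You instead split by whether $w$ lies in the independent set, obtaining the exact identity $\sharp\alpha(T)=c+2^p a$ with induction applied only to $T'$; the price is that you need the refined quantity $c$ (maximum independent sets of $T'$ avoiding $x$) together with the extra structural observation that every vertex of $T(k)$ lies in some maximum independent set, which yields $c\leq a-1$ when $T'$ is special, and you defer the observation that $T'\cong T(0)$ forces $T\cong T(\alpha-1)$ to dispose of the tight case $p=\alpha-2$, where your numerical bound only gives $\sharp\alpha(T)\leq 2^{\alpha-1}+1$. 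So both arguments rest on the same two ingredients (induction plus convexity of $x\mapsto 2^x$, and the fact that $T'\cong T(0)$ makes $T$ special), deployed at different points: yours is an exact count and needs only $T'$ in the induction, the paper's is a coarser inequality but avoids the boundary case entirely by excluding $k=0$ up front. One cosmetic point: in the tight case, rather than saying $T\cong T(\alpha-1)$ ``contradicts the counterexample assumption,'' it is cleanest to say it contradicts the fact, established before Claim 1, that $T$ is not special (or to note that $\sharp\alpha(T(\alpha-1))=2^{\alpha-1}+1$ exactly, so $T$ satisfies the theorem); also, your inequality $2^p+2^{\alpha-p-2}\leq 2^{\alpha-2}+1$ is tight at $p=0$ as well, but that value is excluded since $p\geq 1$.
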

\begin{proof}[Proof of Claim \ref{claim3b}]
Suppose, for a contradiction, that $q=1$.
Note that $\alpha\geq p+2$.
Furthermore, 
if $T'$ is special,
then, since $T$ is not special,
the tree $T'$ is isomorphic to $T(k)$ for some $k\geq 1$,
which implies $\alpha\geq p+3$.
Note that the unique vertex in $Q$ is isolated in $T-(\{ w\}\cup P)$,
which implies $\sharp\alpha\big(T-(\{ w\}\cup P)\big)=\sharp\alpha(T')$.

If $T'$ is special,
then, by the choice of $\alpha$, we obtain
\begin{eqnarray*}
\sharp\alpha(T)
&\leq &
\sharp\alpha\big(T-P\big)
+\left(2^p-1\right)\cdot \sharp\alpha\big(T-(\{ w\}\cup P)\big)\\
&=&
\sharp\alpha\big(T-P\big)
+\left(2^p-1\right)\cdot \sharp\alpha(T')\\
&\leq &
\left(2^{\alpha-p-1}+1\right)
+\left(2^p-1\right)\left(2^{\alpha-p-2}+1\right)\\
&=& 2^{\alpha-2}+2^{\alpha-p-2}+2^p\\
&\leq & 2^{\alpha-2}+2^{\alpha-3}+2\\
&\leq & 2^{\alpha-1},
\end{eqnarray*}
where we use 
the strict convexity of the function $x\mapsto 2^x$,
and $\alpha\geq p+3\geq 4$.

Similarly, 
if $T'$ is not special,
then, by the choice of $\alpha$, we obtain
\begin{eqnarray*}
\sharp\alpha(T)
&\leq &
\sharp\alpha\big(T-P\big)
+\left(2^p-1\right)\cdot \sharp\alpha(T')\\
&\leq &
\left(2^{\alpha-p-1}+1\right)
+\left(2^p-1\right)2^{\alpha-p-2}\\
&=& 2^{\alpha-2}+2^{\alpha-p-2}+1\\
&\leq & 2^{\alpha-2}+2^{\alpha-3}+1\\
&\leq & 2^{\alpha-1},
\end{eqnarray*}
where we use $\alpha\geq 3$.
This contradiction completes the proof of the claim.
\end{proof}

\begin{claim}\label{claim4b}
$T'$ is not special.
\end{claim}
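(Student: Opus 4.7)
The plan is to argue by contradiction. Suppose that $T'\cong T(k)$ for some $k\geq 0$. Recall from Claims \ref{claim1b} and \ref{claim3b} that $q=0$ and that each of the $p$ non-endvertex children $v_1,\ldots,v_p$ of $w$ has a unique endvertex child $u_1,\ldots,u_p$; in particular $p\geq 1$, since $v\in\{v_1,\ldots,v_p\}$.

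The first step is the structural observation that for \emph{every} vertex $z\in V(T(k))$ one has $\alpha(T(k)-z)=\alpha(T(k))=k+1$. I would verify this by a four-case analysis depending on whether $z$ is the center, the unsubdivided leaf, a subdivision vertex, or a subdivided-end leaf; in each case a maximum independent set of size $k+1$ survives. Along the way this also yields $\sharp\alpha(T(k)-z)\in\{2^k,\ 1,\ 2^{k-1}+1,\ 2^{k-1}\}$, so in particular $\sharp\alpha(T'-x)\leq 2^k$ regardless of where $x$ sits in $T'$.

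Next, I would split every maximum independent set $S$ of $T$ according to whether $w\in S$. If $w\in S$, then $v_1,\ldots,v_p,x\notin S$ and $u_1,\ldots,u_p\in S$ are forced by maximality, while $S\cap V(T'-x)$ is an arbitrary maximum independent set of $T'-x$; this contributes exactly $\sharp\alpha(T'-x)$ sets, each of size $1+p+\alpha(T'-x)=p+k+2$. If $w\notin S$, then exactly one of $\{v_i,u_i\}$ lies in $S$ for each $i$ and $S\cap V(T')$ is a maximum independent set of $T'$, giving sets of size at most $p+\alpha(T')=p+k+1$. Since the first size strictly exceeds the second, only the first case contributes, so $\alpha(T)=p+k+2$ and $\sharp\alpha(T)=\sharp\alpha(T'-x)\leq 2^k$.

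Combining this with $p\geq 1$ gives $2^{\alpha-1}+1=2^{p+k+1}+1\geq 2^{k+2}+1>2^k\geq\sharp\alpha(T)$, contradicting $\sharp\alpha(T)\geq 2^{\alpha-1}+1$. The main obstacle is the structural observation in the first step: once one knows that no single vertex of $T(k)$ is contained in every maximum independent set, the decomposition of max IS of $T$ collapses to the single term $\sharp\alpha(T'-x)$, and the bound $\sharp\alpha(T'-x)\leq 2^k$ is then enough to finish. The four-case enumeration itself is routine but must be done carefully to extract the sharp bound.
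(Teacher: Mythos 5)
Your proof is correct and takes essentially the same route as the paper: the four cases for the position of $x$ in $T(k)$ (center, unsubdivided leaf, subdivision vertex, subdivided-end leaf) with counts $2^k$, $1$, $2^{k-1}+1$, $2^{k-1}$ are exactly the four configurations the paper reads off its figure, these values being precisely $\sharp\alpha(T'-x)$ since every maximum independent set of $T$ must contain $w$ and the endvertices below $P$. Your only repackaging is to phrase the case analysis as the uniform facts $\alpha(T(k)-z)=\alpha(T(k))$ and $\sharp\alpha(T(k)-z)\le 2^k$, and then conclude $\sharp\alpha(T)=\sharp\alpha(T'-x)\le 2^k<2^{\alpha-1}$ in one stroke instead of bounding each configuration separately, which is a harmless difference.
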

\begin{proof}[Proof of Claim \ref{claim4b}]
Suppose, for a contradiction, 
that $T'$ is isomorphic to $T(k)$ for some $k\geq 0$.
Figure \ref{fig3b} illustrates the four possibilities 
for the structure of $T$
together with the resulting value of $\sharp\alpha(T)$.
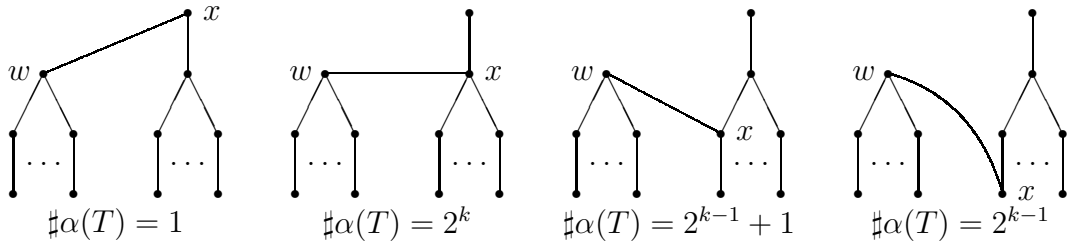
\begin{figure}[H]
\begin{center}
$\mbox{}$\hfill
%TeXCAD Picture [3.pic]. Options:
%\grade{\on}
%\emlines{\off}
%\epic{\off}
%\beziermacro{\on}
%\reduce{\on}
%\snapping{\on}
%\pvinsert{% Your \input, \def, etc. here}
%\quality{8.000}
%\graddiff{0.005}
%\snapasp{1}
%\zoom{16.0000}
\unitlength 0.8mm % = 2.845pt
\linethickness{0.4pt}
\ifx\plotpoint\undefined\newsavebox{\plotpoint}\fi % GNUPLOT compatibility
\begin{picture}(35,34)(0,0)
\put(0,3){\circle*{1.5}}
\put(24,3){\circle*{1.5}}
\put(10,3){\circle*{1.5}}
\put(34,3){\circle*{1.5}}
\put(10,13){\circle*{1.5}}
\put(34,13){\circle*{1.5}}
\put(0,13){\circle*{1.5}}
\put(24,13){\circle*{1.5}}
\put(5,23){\circle*{1.5}}
\put(29,23){\circle*{1.5}}
\put(29,33){\circle*{1.5}}
\put(10,13){\line(0,-1){10}}
\put(34,13){\line(0,-1){10}}
\put(0,13){\line(0,-1){10}}
\put(24,13){\line(0,-1){10}}
\put(0,13){\line(1,2){5}}
\put(24,13){\line(1,2){5}}
\put(5,23){\line(1,-2){5}}
\put(29,23){\line(1,-2){5}}
\put(5,8){\makebox(0,0)[cc]{$\ldots$}}
\put(29,8){\makebox(0,0)[cc]{$\ldots$}}
\put(29,33){\line(0,-1){10}}
\put(1,23){\makebox(0,0)[cc]{$w$}}
\put(17,-2){\makebox(0,0)[cc]{$\sharp\alpha(T)=1$}}
\put(33,33){\makebox(0,0)[cc]{$x$}}
%\emline(5,23)(29,33)
\multiput(5,23)(.0808080808,.0336700337){297}{\line(1,0){.0808080808}}
%\end
\end{picture}\hfill
\begin{picture}(35,34)(0,0)
\put(0,3){\circle*{1.5}}
\put(24,3){\circle*{1.5}}
\put(10,3){\circle*{1.5}}
\put(34,3){\circle*{1.5}}
\put(10,13){\circle*{1.5}}
\put(34,13){\circle*{1.5}}
\put(0,13){\circle*{1.5}}
\put(24,13){\circle*{1.5}}
\put(5,23){\circle*{1.5}}
\put(29,23){\circle*{1.5}}
\put(29,33){\circle*{1.5}}
\put(10,13){\line(0,-1){10}}
\put(34,13){\line(0,-1){10}}
\put(0,13){\line(0,-1){10}}
\put(24,13){\line(0,-1){10}}
\put(0,13){\line(1,2){5}}
\put(24,13){\line(1,2){5}}
\put(5,23){\line(1,-2){5}}
\put(29,23){\line(1,-2){5}}
\put(5,8){\makebox(0,0)[cc]{$\ldots$}}
\put(29,8){\makebox(0,0)[cc]{$\ldots$}}
\put(29,33){\line(0,-1){10}}
\put(1,23){\makebox(0,0)[cc]{$w$}}
\put(17,-2){\makebox(0,0)[cc]{$\sharp\alpha(T)=2^k$}}
\put(33,23){\makebox(0,0)[cc]{$x$}}
\put(5,23){\line(1,0){24}}
\end{picture}\hfill
\begin{picture}(35,34)(0,0)
\put(0,3){\circle*{1.5}}
\put(24,3){\circle*{1.5}}
\put(10,3){\circle*{1.5}}
\put(34,3){\circle*{1.5}}
\put(10,13){\circle*{1.5}}
\put(34,13){\circle*{1.5}}
\put(0,13){\circle*{1.5}}
\put(24,13){\circle*{1.5}}
\put(5,23){\circle*{1.5}}
\put(29,23){\circle*{1.5}}
\put(29,33){\circle*{1.5}}
\put(10,13){\line(0,-1){10}}
\put(34,13){\line(0,-1){10}}
\put(0,13){\line(0,-1){10}}
\put(24,13){\line(0,-1){10}}
\put(0,13){\line(1,2){5}}
\put(24,13){\line(1,2){5}}
\put(5,23){\line(1,-2){5}}
\put(29,23){\line(1,-2){5}}
\put(5,8){\makebox(0,0)[cc]{$\ldots$}}
\put(29,8){\makebox(0,0)[cc]{$\ldots$}}
\put(29,33){\line(0,-1){10}}
\put(1,23){\makebox(0,0)[cc]{$w$}}
\put(17,-2){\makebox(0,0)[cc]{$\sharp\alpha(T)=2^{k-1}+1$}}
\put(28,13){\makebox(0,0)[cc]{$x$}}
%\emline(5,23)(24,13)
\multiput(5,23)(.063973064,-.0336700337){297}{\line(1,0){.063973064}}
%\end
\end{picture}\hfill
\begin{picture}(35,34)(0,0)
\put(0,3){\circle*{1.5}}
\put(24,3){\circle*{1.5}}
\put(10,3){\circle*{1.5}}
\put(34,3){\circle*{1.5}}
\put(10,13){\circle*{1.5}}
\put(34,13){\circle*{1.5}}
\put(0,13){\circle*{1.5}}
\put(24,13){\circle*{1.5}}
\put(5,23){\circle*{1.5}}
\put(29,23){\circle*{1.5}}
\put(29,33){\circle*{1.5}}
\put(10,13){\line(0,-1){10}}
\put(34,13){\line(0,-1){10}}
\put(0,13){\line(0,-1){10}}
\put(24,13){\line(0,-1){10}}
\put(0,13){\line(1,2){5}}
\put(24,13){\line(1,2){5}}
\put(5,23){\line(1,-2){5}}
\put(29,23){\line(1,-2){5}}
\put(5,8){\makebox(0,0)[cc]{$\ldots$}}
\put(29,8){\makebox(0,0)[cc]{$\ldots$}}
\put(29,33){\line(0,-1){10}}
\put(1,23){\makebox(0,0)[cc]{$w$}}
\put(17,-2){\makebox(0,0)[cc]{$\sharp\alpha(T)=2^{k-1}$}}
\put(28,3){\makebox(0,0)[cc]{$x$}}
\qbezier(5,23)(19.5,19)(24,3)
\end{picture}
\hfill$\mbox{}$
\end{center}
\caption{The four possible configurations in the proof of Claim \ref{claim4b} together with the resulting values of $\sharp\alpha(T)$.}\label{fig3b}
\end{figure}
Since $p\geq 1$ and $\alpha=p+k+2$,
we have 
$2^k\leq 2^{\alpha-3}$ 
and 
$2^{k-1}+1\leq 2^{\alpha-4}+1$, that is, 
in all cases $\sharp\alpha(T)$ is less than $2^{\alpha-1}$.
This contradiction completes the proof of the claim.
\end{proof}
Let $T''=T-P$.

\begin{claim}\label{claim5b}
$T''$ is not special.
\end{claim}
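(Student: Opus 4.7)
The plan is to assume, for contradiction, that $T''$ is special, say $T'' \cong T(k)$ for some $k \geq 0$. Since Claim \ref{claim3b} gives $q = 0$, the vertex $w$ has no children in $T''$, so $w$ is an endvertex of $T''$. Consequently, under any isomorphism $T'' \to T(k)$, the vertex $w$ plays the role either of the short leaf $\ell$ (the unique endvertex of $T(k)$ adjacent to the center $c$) or of one of the $k$ deep leaves $f_i$ (each attached to $c$ via its middle vertex $m_i$).

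The count $\sharp\alpha(T)$ decomposes cleanly. Every maximum independent set $I$ of $T$ contains exactly one vertex from each pair $(a_i,b_i)$ in $P$: if $w \in I$, then every $a_i$ is excluded, forcing $b_i \in I$ and yielding exactly one choice for $I \cap P$, whereas if $w \notin I$, each pair independently contributes $a_i$ or $b_i$, giving $2^p$ choices. Writing $A$ and $B$ for the numbers of maximum independent sets of $T''$ that contain, respectively do not contain, $w$, one obtains $\alpha(T) = p + k + 1$ and
\[
\sharp\alpha(T) \;=\; A + 2^p B.
\]
A direct enumeration of the $2^k + 1$ maximum independent sets of $T(k)$, namely the single set $\{c, f_1, \ldots, f_k\}$ together with the $2^k$ sets of the form $\{\ell\} \cup S$ where $S$ selects one of $m_i, f_i$ for each $i$, yields: in Case A ($w = \ell$), $A = 2^k$ and $B = 1$, giving $\sharp\alpha(T) = 2^k + 2^p$; and in Case B ($w = f_i$), $A = 2^{k-1} + 1$ and $B = 2^{k-1}$, giving $\sharp\alpha(T) = 2^{\alpha-2} + 2^{k-1} + 1$.

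The contradiction is then extracted as follows. In Case A with $k = 0$, we have $T'' = K_2$, which forces $x$ to be an endvertex of $T$, so $T$ itself is the subdivided star $T(p) = T(\alpha-1)$ with center $w$; then $T$ is special and $\sharp\alpha(T) = 2^{\alpha-1} + 1$, contradicting the assumption that $T$ is a counterexample. In Case A with $k \geq 1$, the constraints $p, k \geq 1$ and $p + k = \alpha - 1$ give $\sharp\alpha(T) = 2^k + 2^p \leq 2^{\alpha-2} + 2 < 2^{\alpha-1} + 1$ (using $\alpha \geq 3$). In Case B (necessarily $k \geq 1$), the inequality $p \geq 1$ gives $k - 1 < \alpha - 2$, so $\sharp\alpha(T) < 2^{\alpha-1} + 1$; moreover the resulting $T$ contains an induced path of length at least $5$ (from some $b_r$ through $w$, $x$, $c$ to $\ell$ or some $f_j$), hence its diameter exceeds that of every $T(k')$, and $T$ is not special. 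The delicate point, and main obstacle, is Case A with $k = 0$, where $\sharp\alpha(T)$ exactly meets the extremal value $2^{\alpha-1} + 1$: here the contradiction is drawn not from a counting bound but from the structural identification $T \cong T(\alpha-1)$, which collides with $T$'s assumed non-specialness.
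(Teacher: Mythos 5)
Your proposal is correct and takes essentially the same route as the paper: you split according to whether $w$ corresponds to the leaf of $T(k)$ adjacent to the center or to a leaf at distance two from it, obtain exactly the paper's two counts $2^p+2^k$ and $1+2^{k-1}+2^{p+k-1}$, and settle $k=0$ just as the paper does, by noting that $T$ would then be the special tree $T(\alpha-1)$. The only difference is presentational: you spell out the enumeration $\sharp\alpha(T)=A+2^pB$ that the paper reads off its figure.
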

\begin{proof}[Proof of Claim \ref{claim5b}]
Suppose, for a contradiction, 
that $T''$ is isomorphic to $T(k)$ for some $k\geq 0$.
Figure \ref{fig4b} illustrates the two possibilities 
for the structure of $T$
together with the resulting value of $\sharp\alpha(T)$.
Since $T$ is not special, we obtain $k\geq 1$.

\begin{figure}[H]
\begin{center}
$\mbox{}$\hfill
%TeXCAD Picture [4.pic]. Options:
%\grade{\on}
%\emlines{\off}
%\epic{\off}
%\beziermacro{\on}
%\reduce{\on}
%\snapping{\on}
%\pvinsert{% Your \input, \def, etc. here}
%\quality{8.000}
%\graddiff{0.005}
%\snapasp{1}
%\zoom{16.0000}
\unitlength 1mm % = 2.845pt
\linethickness{0.4pt}
\ifx\plotpoint\undefined\newsavebox{\plotpoint}\fi % GNUPLOT compatibility
\begin{picture}(38,38)(0,0)
\put(3,7){\circle*{1}}
\put(27,7){\circle*{1}}
\put(13,7){\circle*{1}}
\put(37,7){\circle*{1}}
\put(13,17){\circle*{1}}
\put(37,17){\circle*{1}}
\put(3,17){\circle*{1}}
\put(27,17){\circle*{1}}
\put(13,17){\line(0,-1){10}}
\put(37,17){\line(0,-1){10}}
\put(3,17){\line(0,-1){10}}
\put(27,17){\line(0,-1){10}}
\put(32,27){\circle*{1}}
\put(27,17){\line(1,2){5}}
\put(32,27){\line(1,-2){5}}
\put(8,12){\makebox(0,0)[cc]{$\ldots$}}
\put(32,12){\makebox(0,0)[cc]{$\ldots$}}
\put(32,37){\circle*{1}}
\put(32,37){\line(0,-1){10}}
\put(36,37){\makebox(0,0)[cc]{$w$}}
\put(0,5){\framebox(16,14)[cc]{}}
\put(3,22){\makebox(0,0)[cc]{$P$}}

%\emline(13,17)(32,37)
\multiput(13,17)(.0336879433,.0354609929){564}{\line(0,1){.0354609929}}
%\end
%\emline(32,37)(3,17)
\multiput(32,37)(-.0489038786,-.0337268128){593}{\line(-1,0){.0489038786}}
%\end
\put(20,0){\makebox(0,0)[cc]{$\sharp\alpha(T)=2^p+2^k$}}
\end{picture}\hfill
\begin{picture}(38,38)(0,0)
\put(3,7){\circle*{1}}
\put(27,7){\circle*{1}}
\put(13,7){\circle*{1}}
\put(37,7){\circle*{1}}
\put(13,17){\circle*{1}}
\put(37,17){\circle*{1}}
\put(3,17){\circle*{1}}
\put(27,17){\circle*{1}}
\put(13,17){\line(0,-1){10}}
\put(37,17){\line(0,-1){10}}
\put(3,17){\line(0,-1){10}}
\put(27,17){\line(0,-1){10}}
\put(32,27){\circle*{1}}
\put(27,17){\line(1,2){5}}
\put(32,27){\line(1,-2){5}}
\put(8,12){\makebox(0,0)[cc]{$\ldots$}}
\put(32,12){\makebox(0,0)[cc]{$\ldots$}}
\put(32,37){\circle*{1}}
\put(32,37){\line(0,-1){10}}
\put(31,7){\makebox(0,0)[cc]{$w$}}
\put(0,5){\framebox(16,14)[cc]{}}
\put(3,22){\makebox(0,0)[cc]{$P$}}

\qbezier(13,7)(22,7)(27,7)
\qbezier(27,7)(15,-1)(3,7)
\put(20,0){\makebox(0,0)[cc]{$\sharp\alpha(T)=1+2^{k-1}+2^{p+k-1}$}}
\end{picture}
\hfill$\mbox{}$
\end{center}
\caption{The two possible configurations in the proof of Claim \ref{claim5b} together with the resulting values of $\sharp\alpha(T)$.}\label{fig4b}
\end{figure}
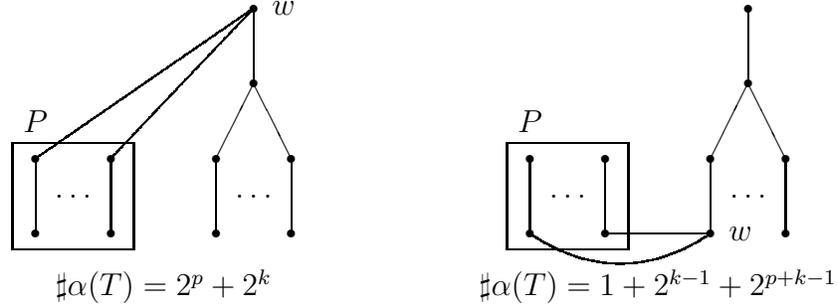
Since $p,k\geq 1$ and $\alpha=p+k+1\geq 3$,
we obtain
$2^p+2^k\leq 2^{p+k-1}+2\leq 2^{\alpha-2}+2\leq 2^{\alpha-1}$
and
$1+2^{k-1}+2^{p+k-1}
\leq 1+2^{\alpha-3}+2^{\alpha-2}
\leq 2^{\alpha-1}$,
that is, in both cases $\sharp\alpha(T)$ is at most $2^{\alpha-1}$.
This contradiction completes the proof of the claim.
\end{proof}
We are now in a position to derive a final contradiction.
By Claims \ref{claim4b} and \ref{claim5b},
and the choice of $\alpha$,
we obtain
\begin{eqnarray*}
\sharp\alpha(T)
&\leq &
\sharp\alpha\big(T''\big)
+\left(2^p-1\right)\cdot \sharp\alpha(T')\\
&\leq &
2^{\alpha-p-1}
+\left(2^p-1\right)2^{\alpha-p-1}\\
&=& 2^{\alpha-1}.
\end{eqnarray*}
This contradiction completes the proof.
\end{proof}
We proceed to our final results,
motivating Conjecture \ref{conjecture1}.

\begin{lemma}\label{lemma1}
Let $\gamma$ be a positive integer.
If $T$ is a tree such that 
\begin{enumerate}[(a)]
\item $T$ has domination number $\gamma$,
\item subject to condition (a), 
the tree $T$ has as many minimum dominating sets as possible,
\item subject to conditions (a) and (b), 
the tree $T$ has as few vertices as possible,
\end{enumerate}
then the following statements hold.
\begin{enumerate}[(i)]
\item Every endvertex of $T$ belongs to some minimum dominating set of $T$.
\item If $x,w(i),v(i,j),u(i,j)\in V(T)$ for $i\in [k]$ and $j\in [p_i]$
are such that (see Figure \ref{fig2})
\begin{itemize}
\item $u(i,j)$ is an endvertex for $i\in [k]$ and $j\in [p_i]$,
\item $N_T(v(i,j))=\{ u(i,j),w(i)\}$ for $i\in [k]$ and $j\in [p_i]$, and
\item $w(i)\in N_T(x)$ for $i\in [k]$, 
\end{itemize}
then $|p_r-p_s|\leq 1$ for $r,s\in [k]$.
\end{enumerate}
\end{lemma}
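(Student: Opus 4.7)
For part (i), I argue by contradiction: suppose some endvertex $u$ of $T$ with support vertex $v$ belongs to no minimum dominating set of $T$. Since every dominating set must contain $u$ or $v$ in order to dominate $u$, this forces $v$ into every minimum dominating set. Set $T^*=T-u$; since removing a single vertex changes $\gamma$ by at most one, $\gamma(T^*)\in\{\gamma-1,\gamma\}$. If $\gamma(T^*)=\gamma-1$ and $D^*$ is a minimum dominating set of $T^*$, then $v\notin D^*$ (otherwise $D^*$ would already dominate $T$, contradicting $\gamma(T)=\gamma$), and so $D^*\cup\{u\}$ is a minimum dominating set of $T$ containing $u$, contradicting the assumption. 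Hence $\gamma(T^*)=\gamma$, and every minimum dominating set of $T$ (which contains $v$ but not $u$) is a minimum dominating set of $T^*$. This gives $\mds(T^*)\geq\mds(T)$ while $|V(T^*)|<|V(T)|$, contradicting either (b) or (c).

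For part (ii), I again proceed by contradiction: suppose $p_r\geq p_s+2$, and form $T'$ from $T$ by detaching the leg $v(r,p_r)$-$u(r,p_r)$ from $w(r)$ and reattaching it at $w(s)$ as a new leg. Then $|V(T')|=|V(T)|$, and the parameters become $(p_r-1,p_s+1)$. My plan is to show that $\gamma(T')=\gamma$ but $\mds(T')>\mds(T)$, contradicting (b). The equality of $\gamma$ follows because each of the $\sum_i p_i$ pairs $\{v(i,j),u(i,j)\}$ forces at least one vertex into every dominating set, and $\sum_i p_i$ is invariant under the move, so the branches contribute precisely $\sum_i p_i$ to $\gamma$ in both trees.

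To compare the numbers of minimum dominating sets, I partition them according to a state vector $\sigma$ recording $D\cap\{x,w(1),\ldots,w(k)\}$ together with, for each $i$, whether $w(i)$ is dominated from outside its branch. For a fixed $\sigma$, the branch at $w(i)$ admits exactly $2^{p_i}$ completions when $w(i)\in D$ or $w(i)$ is externally dominated, and $2^{p_i}-1$ completions otherwise (as some $v(i,j)\in D$ is then required to dominate $w(i)$). Writing $P=2^{p_r}$ and $Q=2^{p_s}$, a direct calculation shows that the state-wise differences between the $r$-and-$s$ factor $a_r'a_s'$ (after the move) and $a_ra_s$ (before) evaluate to $0$, $-Q$, $+P/2$, or $+P/2-Q$, according as neither, only $w(r)$, only $w(s)$, or both of $w(r),w(s)$ are externally undominated. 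Exploiting the symmetry between $w(r)$ and $w(s)$ as analogous neighbors of $x$, the weighted sums for the $-Q$ and $+P/2$ cases match, and their combined contribution reduces to $(P/2-Q)$ times a non-negative quantity; together with the non-negative contribution of the fourth case, this yields a strictly positive difference since $P/2=2^{p_r-1}>2^{p_s}=Q$ when $p_r\geq p_s+2$.

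The main obstacle is the bookkeeping in the case analysis above and the verification that the symmetric external roles of $w(r)$ and $w(s)$ as neighbors of $x$ suffice to match the $-Q$ and $+P/2$ contributions; absent such matching, the net change might not be positive, so the setup of the state vectors and the argument that the weighted tallies for the two ``only one undominated'' states agree must be carried out with care.
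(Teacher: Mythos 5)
Your proof of (i) is the paper's argument almost verbatim, and your proof of (ii) -- factoring each minimum dominating set into an ``outside'' pattern times branch factors of $2^{p_i}$ or $2^{p_i}-1$ according to whether $w(i)$ is dominated from outside its branch, invoking the $w(r)\leftrightarrow w(s)$ symmetry, and rebalancing the legs -- is the same decomposition the paper uses, just organized as a state-by-state difference between $T$ and the rebalanced tree rather than the paper's closed-form count plus convexity of $2^{p_1}+2^{p_2}$. The two points you leave informal (that moving one leg preserves the domination number, and that the total weight of states with $x\notin D$ and $\{w(r),w(s)\}\not\subseteq D$ is strictly positive, which is what makes the increase strict) are precisely the points the paper also leaves implicit, so your write-up is correct at the same level of rigor as the paper's own proof.
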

\begin{proof}
{\it (i)} Suppose, for a contradiction, that some endvertex $u$ of $T$
does not belong to any minimum dominating set of $T$.
Let $v$ be the neighbor of $u$ in $T$.
Let $T'=T-u$.
Since every minimum dominating set of $T$ is a dominating set of $T'$,
we have $\gamma(T')\leq \gamma(T)$.
If $\gamma(T')<\gamma(T)$, 
and $D'$ is a minimum dominating set of $T'$,
then $D'\cup\{ u\}$ is a minimum dominating set of $T$
that contains $u$, 
which is a contradiction.
Hence, we obtain that $\gamma(T')=\gamma(T)$,
and that a set $D$ is a minimum dominating set of $T$
if and only if it is is a minimum dominating set of $T'$,
which implies that $T$ and $T'$ have the same number of minimum dominating sets.
Since $T'$ has less vertices than $T$,
we obtain a contradiction to the choice of $T$.

\medskip

\noindent {\it (ii)} It suffices to consider the case $k=2$, and $p_1\leq p_2-2$.
Let 
$$U=\{ v(i,j):i\in [2]\mbox{ and }j\in [p_i]\}\cup \{ u(i,j):i\in [2]\mbox{ and }j\in [p_i]\}.$$
Let ${\cal D}$ be the set of minimum dominating sets of $T$.
For $S\subseteq \{ w(1),w(2),x\}$, let 
$$\sharp\gamma(S)=
\Big|
\Big\{
D\setminus U:
D\in {\cal D}\mbox{ with }D\cap \{ w(1),w(2),x\}=S
\Big\}
\Big|.$$
Note that $\sharp\gamma(S)=0$
unless $S\in \Big\{ \emptyset,\{ w(1)\},\{ w(2)\},\{ x\}\Big\}$.
Furthermore, 
it is easy to see that $\sharp\gamma(\{ w(1)\})=\sharp\gamma(\{ w(2)\})$.
Now, the number of minimum dominating sets of $T$ equals
\begin{eqnarray*}
&& \sharp\gamma(\{ x\})2^{p_1+p_2}\\
&& +\sharp\gamma(\{ w(1)\})2^{p_1}\left(2^{p_2}-1\right)
+\sharp\gamma(\{ w(2)\})\left(2^{p_1}-1\right)2^{p_2}\\
&&+\sharp\gamma(\emptyset)\left(2^{p_1}-1\right)\left(2^{p_2}-1\right)\\
&=& \Big(\sharp\gamma(\{ x\})+3\sharp\gamma(\{ w(1)\})+\sharp\gamma(\emptyset)\Big)2^{p_1+p_2}
-\Big(\sharp\gamma(\{ w(1)\})+\sharp\gamma(\emptyset)\Big)
\Big(2^{p_1}+2^{p_2}\Big)
+\sharp\gamma(\emptyset).
\end{eqnarray*}
Since, for a fixed value of $p_1+p_2$,
the expression $2^{p_1}+2^{p_2}$ 
is minimized choosing $p_1$ and $p_2$ as equal as possible,
the statement follows.
\end{proof}

\begin{figure}[H]
\begin{center}
{\footnotesize
%TeXCAD Picture [2.pic]. Options:
%\grade{\on}
%\emlines{\off}
%\epic{\off}
%\beziermacro{\on}
%\reduce{\on}
%\snapping{\on}
%\pvinsert{% Your \input, \def, etc. here}
%\quality{8.000}
%\graddiff{0.005}
%\snapasp{1}
%\zoom{16.0000}
\unitlength 1.5mm % = 4.268pt
\linethickness{0.4pt}
\ifx\plotpoint\undefined\newsavebox{\plotpoint}\fi % GNUPLOT compatibility
\begin{picture}(64,35)(0,0)
\put(5,5){\circle*{1}}
\put(45,5){\circle*{1}}
\put(5,15){\circle*{1}}
\put(45,15){\circle*{1}}
\put(5,5){\line(0,1){10}}
\put(45,5){\line(0,1){10}}
\put(5,15){\line(1,1){7}}
\put(45,15){\line(1,1){7}}
\put(12,22){\line(1,-1){7}}
\put(52,22){\line(1,-1){7}}
\put(19,15){\circle*{1}}
\put(59,15){\circle*{1}}
\put(19,5){\circle*{1}}
\put(59,5){\circle*{1}}
\put(0,5){\makebox(0,0)[cc]{$u(1,1)$}}
\put(40,5){\makebox(0,0)[cc]{$u(k,1)$}}
\put(24,5){\makebox(0,0)[cc]{$u(1,p_1)$}}
\put(64,5){\makebox(0,0)[cc]{$u(k,p_k)$}}
\put(12,22){\circle*{1}}
\put(52,22){\circle*{1}}
\put(32,32){\circle*{1}}
\put(9,22){\makebox(0,0)[cc]{$w(1)$}}
\put(55,22){\makebox(0,0)[cc]{$w(k)$}}
\put(0,15){\makebox(0,0)[cc]{$v(1,1)$}}
\put(40,15){\makebox(0,0)[cc]{$v(k,1)$}}
\put(24,15){\makebox(0,0)[cc]{$v(1,p_1)$}}
\put(64,15){\makebox(0,0)[cc]{$v(k,p_k)$}}
\put(32,15){\makebox(0,0)[cc]{$\ldots$}}
\put(32,5){\makebox(0,0)[cc]{$\ldots$}}
\put(52,15){\makebox(0,0)[cc]{$\ldots$}}
\put(52,5){\makebox(0,0)[cc]{$\ldots$}}
\put(12,15){\makebox(0,0)[cc]{$\ldots$}}
\put(12,5){\makebox(0,0)[cc]{$\ldots$}}
\put(19,15){\line(0,-1){10}}
\put(59,15){\line(0,-1){10}}
\put(52,22){\line(-2,1){20}}
\put(32,32){\line(-2,-1){20}}
\put(32,35){\makebox(0,0)[cc]{$x$}}
\end{picture}
}
\end{center}
\caption{The tree $T(p_1,\ldots,p_k)$.}\label{fig2}
\end{figure}
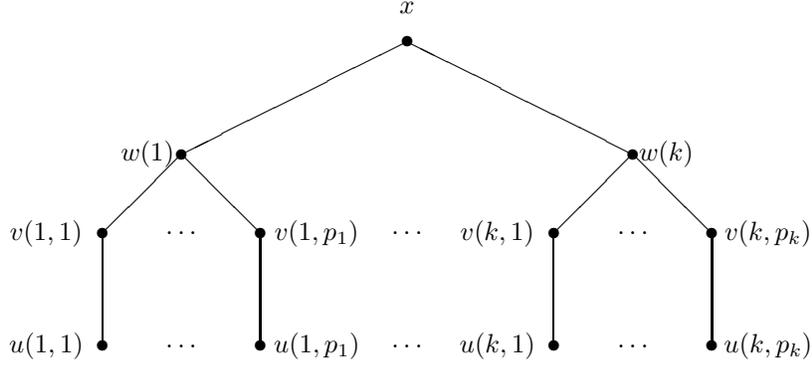

Let $\gamma$ and $k$ be positive integers with $k<\gamma$.
If $p_1,\ldots,p_k$ are such that $p_1+\cdots+p_k=\gamma-1$,
then the tree $T(p_1,\ldots,p_k)$ in Figure \ref{fig2}
has domination number $\gamma$.
Arguing as in the proof of \ref{lemma1}(ii), 
the number of minimum dominating sets is maximized by choosing 
$$p_i=
\begin{cases}
\left\lceil\frac{\gamma-1}{k}\right\rceil & \mbox{, if $i\in [(\gamma-1)\,\,{\rm mod}\,\,k]$, and}\\[3mm]
\left\lfloor\frac{\gamma-1}{k}\right\rfloor & \mbox{, if $i\in [k]\setminus [(\gamma-1)\,\,{\rm mod}\,\,k]$,} 
\end{cases}$$
in which case $T(p_1,\ldots,p_k)$ has exactly
\begin{eqnarray*}
&& 2^{\gamma-1}\\
&+&\big((\gamma-1)\,\,{\rm mod}\,\,k\big)
2^{\left\lceil\frac{\gamma-1}{k}\right\rceil}
\left(2^{\left\lceil\frac{\gamma-1}{k}\right\rceil}-1\right)^{\big((\gamma-1)\,\,{\rm mod}\,\,k\big)-1}
\left(2^{\left\lfloor\frac{\gamma-1}{k}\right\rfloor}-1\right)^{k-\big((\gamma-1)\,\,{\rm mod}\,\,k\big)}\\
&+&\big(k-\big((\gamma-1)\,\,{\rm mod}\,\,k\big)\big)
\left(2^{\left\lceil\frac{\gamma-1}{k}\right\rceil}-1\right)^{\big((\gamma-1)\,\,{\rm mod}\,\,k\big)}
2^{\left\lfloor\frac{\gamma-1}{k}\right\rfloor}
\left(2^{\left\lfloor\frac{\gamma-1}{k}\right\rfloor}-1\right)^{k-\big((\gamma-1)\,\,{\rm mod}\,\,k\big)-1}
\end{eqnarray*}
minimum dominating sets.
We believe that choosing $k\in \{ 1,2,\ldots,\gamma-1\}$ 
for a given positive integer $\gamma$ in such a way that 
this last expression is maximized, 
yields a tree with domination number $\gamma$
and the maximum number of minimum dominating sets.

The following table illustrates some explicit values
with the optimal choices of $k$.
\begin{figure}[H]
\begin{center}
\begin{tabular}{|c||c|c|c|c|}
\hline
$\gamma$ & 10 & 50 & 100 & 500\\
\hline
$\sharp\gamma$ & 1176 & $4160\cdot 10^{12}$ & $8187\cdot 10^{27}$ & $8152\cdot 10^{148}$\\
\hline
k& 3 & 9 & 16 & 56\\
\hline
\end{tabular}
\end{center}
\caption{The maximum number $\sharp\gamma$ of minimum dominating sets of a tree $T(p_1,\ldots,p_k)$ with domination number $\gamma$, and the corresponding value of $k$.}\label{fig3}
\end{figure}
Asymptotic considerations suggest that the optimum value of $k$ grows like
$\Theta\left(\frac{\gamma}{\log \gamma}\right)$, 
which motivates Conjecture \ref{conjecture1}.

\end{document}